\numberwithin{equation}{section}
\newtheorem{thm}{Theorem}[section]
\newtheorem{prop}[thm]{Proposition}
\newtheorem{lem}[thm]{Lemma}
\newtheorem{cor}[thm]{Corollary}
\theoremstyle{remark}
\newtheorem{rem}{Remark}[section]
\newtheorem{defn}{Definition}
\newcommand{\BBB}{\mathbb}
\newcommand{\R}{{\BBB R}}
\newcommand{\Z}{{\BBB Z}}
\newcommand{\T}{{\BBB T}}
\newcommand{\Zd}{\dot{\BBB Z}}
\newcommand{\N}{{\BBB N}}
\newcommand{\C}{{\BBB C}}
\newcommand{\LR}[1]{{\langle {#1} \rangle }}
\newcommand{\lec}{{\ \lesssim \ }}
\newcommand{\gec}{{\ \gtrsim \ }}
\newcommand{\cross}{\times}
\newcommand{\al}{\alpha}
\newcommand{\be}{\beta}
\newcommand{\ga}{\gamma}
\newcommand{\si}{\sigma}
\newcommand{\vp}{\varphi}
\newcommand{\e}{\varepsilon}
\newcommand{\ta}{\tau}
\newcommand{\x}{\xi}
\newcommand{\p}{\partial}
\newcommand{\la}{\lambda}
\newcommand{\La}{\Lambda}
\renewcommand{\th}{\theta}
\newcommand{\I}{\infty}
\newcommand{\D}{{\mathcal D}}
\newcommand{\EQ}[1]{\begin{equation} \begin{split} #1
 \end{split} \end{equation}}
\newcommand{\EQS}[1]{\begin{align} #1 \end{align}}
\newcommand{\EQQS}[1]{\begin{align*} #1 \end{align*}}
\newcommand{\EQQ}[1]{\begin{equation*} \begin{split} #1
 \end{split} \end{equation*}}
\newcommand{\Sp}{\mathcal{S'}}
\newcommand{\1}{{\bf 1}}
\newcommand{\ti}{\widetilde}
\newcommand{\ha}{\widehat}
\newcommand{\Hd}{\dot{H}}
\title[KdV with quasi periodic initial data]{Local well-posedness of the KdV equation with quasi periodic initial data
}
\author[K. Tsugawa]{Kotaro Tsugawa
}
\address{Graduate School of Mathematics, Nagoya University,
Chikusa-ku, Nagoya, 464-8602, Japan}
\email[K. Tsugawa]{tsugawa@math.nagoya-u.ac.jp}
\subjclass[2000]{35Q53, 35B15, 35B65, 70K43}
\keywords{quasi periodic, almost periodic, Korteweg-de Vries equation, well-posedness, Cauchy problem, Fourier restriction norm, low regularity, Diophantine problem}
\begin{document}

\begin{abstract}
We prove the local well-posedness for the Cauchy problem of the Korteweg-de Vries equation in a quasi periodic function space.
The function space contains functions such that $f=f_1+f_2+...+f_N$ where $f_j$ is in the Sobolev space of order $s>-1/2N$ of $2\pi\al^{-1}_j$ periodic functions.
Note that $f$ is not a periodic function when the ratio of periods $\al_i/\al_j$ is irrational. The main tool of the proof is the Fourier restriction norm method introduced by Bourgain.
We also prove an ill-posedness result in the sense that the flow map (if it exists) is not $C^2$,
which is related to the Diophantine problem.
\end{abstract}
\maketitle


\section{Introduction}

We consider the Cauchy problem of the Korteweg-de Vries equation as follows:
\EQ{\label{KDV}
u_t +u_{xxx}+(u^2)_x=0, \qquad u(0)=f.
}
where $u(x,t)$ and $f(x)$ are complex or real valued functions.
The local and global well-posedness for \eqref{KDV} in $H^s(\T)$ has been intensively studied.
In the present paper, we show the local well-posedness in a quasi periodic function space.
Note that a $2\pi$-periodic function on $\R$ can be written as $f=\sum_{j=1}^\infty a_je^{i\la_j x}$ with $\{\la_j\}_{j=1}^\infty\subset \Z$.
On the other hand, when $f$ is an almost periodic function, the distribution of $\{\la_j\}_{j=1}^\infty$ on $\R$ can be totally arbitrary.
When $f$ is a quasi periodic function, there exist $N\in\N$, $\al\in \R^N$ such that $\{\la_j\}_{j=1}^\infty\subset \{\al\cdot k: k \in\Z^N\}$.
First, we recall some known results of the well-posedness for \eqref{KDV} in $H^s(\T)$.
In \cite{Bo}, Bourgain proved the local and global well-posedness with $s \ge 0$ by introducing the Fourier restriction norm method.
In \cite{Ke}, Kenig, Ponce and Vega proved refined bilinear estimates to extend Bourgain's local well-posedness result to $s \ge -1/2$.
In \cite{Co}, Colliander, Keel, Staffilani, Takaoka and Tao proved the global well-posedness with $s \ge -1/2$.
Since the KdV equation on $H^s$ with $s<0$ has no conservation law, it seemed difficult to consider the long time behavior of solutions
in $H^s$ with $s<0$.
To overcome this difficulty, they introduced a regularizing Fourier multiplier operator $I$
and calculated a modified energy defined in $H^s$ with $s\ge -1/2$, which is called the ``$I$-method''.
Kappeler and Topalov proved the global well-posedness with $s\ge -1$ by the inverse scattering method in \cite{Kap}.
It is known that the flow map is analytic if and only if $s\ge -1/2$.

For the well-posedness results in $H^s(\R)$, see \cite{Bn}, \cite{Ka}, \cite{Gi}, \cite{Bo}, \cite{Ke}, \cite{Co}, \cite{Ki1} and \cite{G}.
For the ill-posedness results and counter examples of bilinear estimates related to this problem, see \cite{Ke}, \cite{Tz}, \cite{Na}, \cite{Ke2}, \cite{Chr}, \cite{Ki2} and \cite{Mol}.
To the best of author's knowledge, there are few results for almost periodic initial data.
In \cite{Eg}, Egorova proved that global solutions of \eqref{KDV} exist when the initial data $f$ is real valued and $f \in Q_s(A)$ for $s\in \N$ and $s \ge 4$ and that the solutions are almost periodic in $t$ and $x$.
Here, we say $f \in Q_s(A)$ if and only if 
\EQQ{
\|f\|_s=\sup_{x} \Big(\int_x^{x+1} |f|^2+|\p_x^s f|^2 \, dx \Big)^{1/2} <\infty
}
and there exists a sequence of $a_n$-periodic functions $f_n \in H^s(\R/a_n)$ such that
the following condition holds:
\EQ{
\forall p>0, \qquad\qquad \lim_{n\to\infty} e^{pa_{n+1}}\|f-f_n\|_{s}=0,
}
where $A=\{a_n\}$ is an increasing sequence of positive numbers such that $a_{n+1}/a_{n}\in \N \setminus \{1\}$.
Note that $f_1+f_2 \notin Q_s(A)$ if $f_j$ is an $a_j$-periodic function for $j=1,2$ and $a_1/a_2$ is irrational.
Namely, quasi periodic functions are not in $Q_s(A)$.
The Navier-Stokes equations with almost periodic initial data were studied in \cite{Giga1}, \cite{Giga2}, \cite{Yone} and \cite{Giga3}.
Kenig, Ponce and Vega studied the KdV with unbounded initial data in \cite{Ke1}, which does not include almost periodic initial data.

In the proof of the local well-posedness results in $H^s(\T)$ (\cite{Bo}, \cite{Ke}, \cite{Co}), the following argument is crucial.
If $u$ is a solution of \eqref{KDV} on $\T$, we have the following conservation law:
\EQ{
\int_{\T} u(t)\,dx =\int_{\T} f \, dx=c \label{eee1}
}
for any $t$.
Put
\EQ{\label{t1}
v=u-c.
}
Then, $v$ satisfies
\EQ{\label{KDV'}
v_t +2cv_x +v_{xxx}+(v^2)_x=0, \qquad v(0)=f-c
}
and $\int_{\T} v(t) \, dx=\int_{\T} v(0) \, dx =0$.
In the proof of the local well-posedness by the Fourier restriction norm method,
the presence of the term $2cv_x$ in \eqref{KDV'} does not make any difference.
Therefore, it is enough to study \eqref{KDV} under the assumption $\int_{\T} u(t) \, dx = \int_{\T} f \, dx=0$ when $f \in H^s(\T)$.
When $u$ is a spatially quasi periodic function, we have the following conservation law instead of \eqref{eee1}:
\EQQ{
\lim_{M\to \infty}\frac{1}{2M}\int_{-M}^{M} u(t)\,dx =\lim_{M\to \infty}\frac{1}{2M}\int_{-M}^{M} f \, dx=c
}
for any $t$
and we can use the same argument as in the case of $\T$ above.
Therefore, we assume $ \lim_{M\to \infty}\frac{1}{2M}\int_{-M}^{M} u(t) \, dx = \lim_{M\to \infty}\frac{1}{2M}\int_{-M}^{M} f \, dx=0$
in the present paper.

We assume that $N \in \N$, $\al=(\al_1,\al_2,\cdots,\al_N)\in \R^N$ satisfy $\al\cdot k \neq 0$
for any $k\in \Zd^N$ where $\Zd^N=\Z^N \setminus (0,\cdots,0)$ throughout the paper.
When $f_j$ are $2\pi\al_j^{-1}$ periodic functions, $\sum_{j=1}^N f_j$
is not a periodic function but a quasi periodic function.
It is a natural question to ask whether the local well-posedness holds or not in a function space
which contains $\sum_{j=1}^N f_j$ with $f_j\in \Hd^{\si_j}(\R/2\pi\al_j^{-1}\Z)$.
A quasi periodic function $\sum_{j=1}^N f_j$ can be written as follows:
\EQQ{
\sum_{j=1}^N\sum_{k\in \Zd} \ha{f}_j(k) \exp (i\al_j k x) 
}
 where $\ha{f}_j(k) : \Zd \to \C$ are the Fourier coefficients of $f_j$.
Since the KdV equation has a quadratic term, we need to
consider multiplications of functions of the form above.
Therefore, we introduce the following function space.
\begin{defn}
For $\si=(\si_1,\cdots,\si_N)\in \R^N$ and  $a\in \R$, we define the Banach space $G^{\si,a}$ by
\EQ{\label{Gdef}
G^{\si,a}=\Big\{\, f(x)=\sum_{k\in \Zd^N} \ha{f}(k)\exp (i\al\cdot k x)  \,\Big|\, \ha{f}: \Zd^N \to \C, \, \|f\|_{{G}^{\si,a}} <\I \,\Big\}
}
where
\EQQ{
\|f\|_{G^{\si,a}}:=\|\ha{f}\|_{\ha{G}^{\si,a}}:=\Big\|\, |\al\cdot k|^a \prod_{j=1}^N \LR{k_j}^{\si_j} \ha{f}(k) \,\Big\|_{l^2(\Zd^N)}, \qquad
\LR{\cdot}=(1+|\cdot|^2)^{1/2}.
}
\end{defn}
Note that the series in \eqref{Gdef} does not converge even in $\Sp$ when $\si_j$ are small (see Lemma \ref{ill2}).
Therefore, we introduce the following notations and assumption.
\begin{defn}
Let $e_j$ be the standard unit vector in $\R^N$ whose $j$-th component is equal to $1$ and the others are equal to $0$.
Put $d_j=\frac{s}{N-1}\{(1,1,\ldots,1)-e_j\}$.
For $s\in \R$, we define
\EQ{
\La_s=\Big\{ \sum_{j=1}^N \th_j d_j \in \R^N \, \Big|\, \sum_{j=1}^N \th_j =1, \, \th_j \ge 0 \,\, (1\le j \le N) \Big\}.
}
\end{defn}
\noindent
Using the fact that $\La_s$ is the set of internal dividing points of $d_j (1\le j \le N)$,
we can easily check that the series in \eqref{Gdef} converges in $\Sp$ when $a\le 0$ and the following assumption holds:
\EQQ{
\hspace*{-10em}\text{(A)} \hspace*{11em} s>(N-1)/2, \qquad \qquad \si \in \La_s.
}
So, we assume (A) throughout the paper (except Lemma \ref{ill2}).
Here, we give some remarks on (A), $G^{\si,a}$ and $\La_s$.
\begin{rem}\label{R1}
If $f_j \in \Hd^{\si_j+a}(\R/2\pi\al_j^{-1}\Z)$, then $\sum_{j=1}^N f_j \in G^{\si,a}$.
\end{rem}
\begin{rem}
If $\si \in \La_s$, then $\sum_{j=1}^N \si_j =s$.
\end{rem}
\begin{rem}
For simpleness we assume $\si_1 \le \si_2 \le \cdots \le \si_N$.
Then, (A) is equivalent to that $\si_1\ge 0$ and $\si_1+\cdots +\si_j >(j-1)/2$
for any $j$ satisfying $2 \le j \le N$.
\end{rem}

We rewrite \eqref{KDV} into the following integral equation:
\EQ{\label{KDVI}
u(t)=e^{-t\p_x^3} f+ \int_0^t e^{-(t-t')\p_x^3} \p_x(u^2(t')) \, dt'.
}
Now, we state our main result.
\begin{thm}\label{maintheorem}
Assume {\upshape (A)}.
Then, \eqref{KDVI} is locally well-posed in $G^{\si,-1/2}$.
\end{thm}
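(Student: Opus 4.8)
The plan is to run Bourgain's Fourier restriction norm method in Bourgain spaces adapted to $G^{\si,-1/2}$. For $\si\in\R^N$ and $a,b\in\R$ I would define $X^{\si,a,b}$ as the completion of the smooth functions $u(x,t)=\sum_{k\in\Zd^N}\int_{\R}\ti u(k,\ta)e^{i(\al\cdot k\,x+\ta t)}\,d\ta$ under
\[
\|u\|_{X^{\si,a,b}}=\Big\|\,|\al\cdot k|^a\prod_{j=1}^N\LR{k_j}^{\si_j}\,\LR{\ta-(\al\cdot k)^3}^{b}\,\ti u(k,\ta)\,\Big\|_{l^2_k L^2_\ta},
\]
so that $X^{\si,a,0}$ at a fixed time is $G^{\si,a}$, the sum over $\Zd^N$ encoding the density-zero normalization, and $e^{-t\p_x^3}$ of $G^{\si,a}$-data lies locally in $t$ in $X^{\si,a,b}$ for every $b$. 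Fix $b$ slightly above $1/2$ and let $X^{\si,a,b}_T$ be the restriction to $t\in[0,T]$. The argument then has three parts — linear estimates, the bilinear estimate, and a contraction — the decisive one being the bilinear estimate.

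\textbf{Linear estimates.} With a cutoff $\psi\in C_c^\infty$ that is $1$ near $0$, the bounds $\|\psi(t/T)e^{-t\p_x^3}f\|_{X^{\si,-1/2,b}}\lec\|f\|_{G^{\si,-1/2}}$ and $\|\psi(t/T)\int_0^t e^{-(t-t')\p_x^3}F(t')\,dt'\|_{X^{\si,-1/2,b}}\lec T^{\th}\|F\|_{X^{\si,-1/2,b-1}}$ for some $\th>0$, together with $X^{\si,-1/2,b}_T\hookrightarrow C([0,T];G^{\si,-1/2})$, go through exactly as in the periodic case: they only see the modulation weight $\LR{\ta-(\al\cdot k)^3}$ and the scalar dispersion relation $\ta\mapsto(\al\cdot k)^3$, the spatial weight $|\al\cdot k|^{-1/2}\prod_j\LR{k_j}^{\si_j}$ being an inert Fourier multiplier.

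\textbf{Bilinear estimate (the crux).} I claim that under (A) one has $\|\p_x(uv)\|_{X^{\si,-1/2,b-1}}\lec\|u\|_{X^{\si,-1/2,b}}\|v\|_{X^{\si,-1/2,b}}$. Dualizing against $W\in L^2_{k,\ta}$ and substituting out all weights reduces this to an $l^2$-bound for a trilinear form over $k=k'+k''$ in $\Z^N$, $\ta=\ta_1+\ta_2$, with multiplier
\[
|\al\cdot k|^{1/2}|\al\cdot k'|^{1/2}|\al\cdot k''|^{1/2}\;\frac{\prod_{j}\LR{k_j}^{\si_j}}{\prod_{j}\LR{k'_j}^{\si_j}\LR{k''_j}^{\si_j}}\;\frac{1}{\LR{\nu_0}^{1-b}\LR{\nu_1}^{b}\LR{\nu_2}^{b}},
\]
where $\nu_0=\ta-(\al\cdot k)^3$, $\nu_1=\ta_1-(\al\cdot k')^3$, $\nu_2=\ta_2-(\al\cdot k'')^3$; the factor $\al\cdot k$ coming from $\p_x$ has combined with the three $a=-1/2$ weights to leave precisely the half-powers displayed. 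The identity $\nu_0-\nu_1-\nu_2=-3(\al\cdot k)(\al\cdot k')(\al\cdot k'')$, valid because $\al\cdot k=\al\cdot k'+\al\cdot k''$, gives $|\al\cdot k\,\al\cdot k'\,\al\cdot k''|\lec\max(\LR{\nu_0},\LR{\nu_1},\LR{\nu_2})$, so those half-powers are absorbed by $\max(\cdots)^{1/2}$ — this is exactly why $a=-1/2$ is the natural exponent, mirroring the threshold $s=-1/2$ on $\T$. One then splits according to which $\LR{\nu_i}$ is largest and, in each region, uses Cauchy–Schwarz in the modulation variables (here $b>1/2$ is needed) to reduce to an $l^2$-summability estimate over the lattice of the remaining frequencies. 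Assumption (A) enters here: since $k_j=k'_j+k''_j$ and all $\si_j\ge0$ one has $\LR{k_j}^{\si_j}\lec\LR{k'_j}^{\si_j}\LR{k''_j}^{\si_j}$, and by interpolation it suffices to treat the vertices $\si=d_j$, where $N-1$ coordinates carry the summable exponent $s/(N-1)>1/2$ (this is $s>(N-1)/2$) and the remaining $j$-th coordinate carries no weight and is instead controlled through the frequency constraint and the modulation; the hypothesis $\al\cdot k\neq0$ on $\Zd^N$ keeps the dispersion relation injective so the resonance function does not degenerate on a sublattice. The genuinely delicate regions are the near-resonant ones, $\max(\LR{\nu_0},\LR{\nu_1},\LR{\nu_2})=O(1)$ (equivalently $|\al\cdot k\,\al\cdot k'\,\al\cdot k''|=O(1)$), where, as at the endpoint for periodic KdV, there is only logarithmic room; these are handled by a counting estimate for $\{k\in\Z^N:\al\cdot k\in I\}$ over short intervals $I$, which replaces the elementary integer-counting lemmas available on $\T$.

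\textbf{Contraction and the main obstacle.} Given the linear and bilinear estimates, set $\Phi(u)=\psi(t/T)e^{-t\p_x^3}f+\psi(t/T)\int_0^t e^{-(t-t')\p_x^3}\p_x(u^2(t'))\,dt'$; applying the bilinear estimate to $u^2$ and to $u^2-v^2=(u+v)(u-v)$ and using the factor $T^{\th}$ shows that for $T=T(\|f\|_{G^{\si,-1/2}})$ small $\Phi$ is a contraction on a ball of $X^{\si,-1/2,b}_T$, whose unique fixed point lies in $C([0,T];G^{\si,-1/2})$; Lipschitz continuous dependence on $f$ follows from the same estimates. The main obstacle is the bilinear estimate, for two intertwined reasons: the endpoint nature of $a=-1/2$ forces a careful treatment of the resonant set, exactly as at $s=-1/2$ for periodic KdV; and because the $\al\cdot k$ are arbitrary reals rather than integers, the lattice-point counting that controls the resonant contribution must be carried out in a Diophantine setting — and it is there that $s>(N-1)/2$ and $\si\in\La_s$ are genuinely used.
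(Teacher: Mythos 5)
Your framework (Bourgain spaces over $G^{\si,-1/2}$, the resonance identity, interpolation to the vertices of $\La_s$) is the right one, but the step you call the crux is false as stated: at the endpoint weight $a=-1/2$ the bilinear estimate
\begin{equation*}
\|\p_x(uv)\|_{X^{\si,-1/2,b-1}}\ \lec\ \|u\|_{X^{\si,-1/2,b}}\|v\|_{X^{\si,-1/2,b}}
\end{equation*}
fails for every $b>1/2$, so the contraction you build on it cannot close. Concretely, take $N=2$, $q$ large, and choose $p$ so that $\de:=|\al_1q-\al_2p|\sim 1$ (always possible); let $\ti u,\ti v$ be unit bumps at $k'=(q,0)$ and $k''=(0,-p)$ with modulations $O(1)$. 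The output sits at $k=(q,-p)$ with $|\al\cdot k|=\de$, its lattice weights satisfy $\LR{q}^{\si_1}\LR{p}^{\si_2}=\LR{q}^{\si_1}\cdot\LR{p}^{\si_2}$, i.e.\ the $\prod_j\LR{k_j}^{\si_j}$ factors cancel exactly between the two sides for \emph{every} $\si$, and the output modulation is forced to be of size $3|\al\cdot k||\al\cdot k'||\al\cdot k''|\sim \de q^2$. The ratio of left to right side is then comparable to $\de^{1/2}(\de q^2)^{b-1}q=\de^{\,b-1/2}q^{\,2b-1}\sim q^{\,2b-1}\to\infty$ when $b>1/2$ (taking instead $\de\lec q^{-1}$ via Dirichlet gives $q^{\,b-1/2}$, the configuration used in the ill-posedness proof of Proposition \ref{ill1}). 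This is the quasi-periodic incarnation of the classical failure of the periodic KdV estimate at $s=-1/2$ for $b\neq 1/2$: the output is $O(1)$ in physical frequency $\al\cdot k$ but large in lattice coordinates, so no choice of $\si$ in (A) rescues it, and time localization does not help since the divergence is in the frequency parameter while $T$ is fixed.

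The missing ingredient is the endpoint treatment that the paper carries out: work at $b=1/2$ exactly, in $Z^{\si,-1/2}=X^{\si,-1/2,1/2}\cap Y^{\si,-1/2,0}$, where the $L^1_\ta$-based $Y$ norm supplies both the embedding into $C_t(\R:G^{\si,-1/2})$ and the Duhamel estimate (Lemmas \ref{LE1}, \ref{LE2}), and prove the bilinear estimates in the form \eqref{e41}--\eqref{e42}, with output modulation exponent exactly $-1/2$ (resp.\ $-1$). The factor $T^\e$ then comes not from taking $b>1/2$ but from Lemma \ref{LE3}, which is available because the core estimates of Lemma \ref{BiEstLem} hold for all $b>3/8$. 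Also, your plan to control the unweighted coordinate by ``a counting estimate for $\{k:\al\cdot k\in I\}$'' is too vague to carry the near-resonant regions; the paper's actual mechanism is a shifted Kenig--Ponce--Vega bilinear estimate in that single lattice coordinate, uniform in the shifts $P,Q$ (Lemma \ref{MainLem}), combined with the algebra/Sobolev-type product estimates in the remaining coordinates (Lemma \ref{LE4}) and the decomposition according to the largest modulation together with Lemma \ref{LE5}. Your linear estimates and fixed-point step are fine, but only after the spaces and bilinear estimates are reorganized in this endpoint form.
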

\begin{rem}
If $\si_1=\cdots=\si_N$, then (A) is equivalent to $\si_j >1/2-1/2N$.
Therefore, from Remark \ref{R1}, it follows that
if $r>-1/2N$ and $f_j\in \Hd^r(\R/2\pi\al_j^{-1}\Z)$ for $1\le j \le N$, then $\sum_{j=1}^N f_j \in G^{\si,-1/2}$ with (A).
Note that when $N=1$, the lower bound $-1/2N$ coincides $-1/2$, which is the critical value to assure that the flow map in $H^s(\T)$ is analytic.
In this sense, Theorem \ref{maintheorem} is a generalization of the local well-posedness result for the KdV on $\T$ by Kenig, Ponce and Vega in \cite{Ke}.
\end{rem}
For a more precise statement of this theorem, see Proposition \ref{mainprop}.
The main idea in the present paper is the definition of $G^{\si,a}$ and to take $a=-1/2$.
It is known that a nonlinear interaction between high and very low frequency parts of solutions make the well-posedness problem difficult in the study of the KdV equation.
To avoid this difficulty, we usually use the transform \eqref{t1} for the periodic case.
It is not enough for the quasi periodic case because the support of $\mathcal{F}_x[f](\xi)=\sum_{k\in\Zd^N} \delta(\xi-\al\cdot k)\ha{f}(k)$ can be dense around $\xi=0$ when $f$ is quasi periodic.
In our proof, it is crucial to assume $\mathcal{F}_x[f](\xi)$ is very small around $\xi=0$.
This is the reason why we use the homogeneous weight function $|\al\cdot k|^a$ in the definition of $G^{\si,a}$ and take $a=-1/2$ (see Remark \ref{rem10}).
Similar ideas were used for some different problems, for instance, for a stochastic KdV equation in \cite{de2}, for quadratic nonlinear Schr\"odinger equations in \cite{KT} and for the fifth order KdV equation in \cite{Ktaka}.

We also prove an ill-posedness result in the sense that the flow map in $G^{\si,a}$ (if it exists)
is not $C^2$ at the origin
under some condition on $\si$ and $a$, which is related to the Diophantine problem (see Proposition \ref{ill1}).
A relation between a Diophantine condition and the well-posedness of a coupled system of KdV equations
is studied by Oh in \cite{Oh1} and \cite{Oh2}.
The method used for the ill-posedness result is introduced by Bourgain in \cite{Bo2} to prove that the flow map of KdV in $H^s(\T)$ (if it exists) with $s<-1/2$
can not be $C^3$.

If $u$ is a real valued solution of \eqref{KDV} and $f$ is a quasi periodic function, the following conservation law holds (at least formally):
\EQQ{
\|u(t)\|_{G^{0,0}}^2=\lim_{M\to \infty}\frac{1}{2M}\int_{-M}^{M} u^2(t) \,dx=\lim_{M\to \infty} \frac{1}{2M}\int_{-M}^{M} f^2 \,dx
=\|f\|_{G^{0,0}}^2.
}
Unfortunately, we can not extend the local solutions obtained in Theorem \ref{maintheorem} to global ones
by using this conservation law because $\|u(t)\|_{G^{\si,-1/2}}$ with the assumption (A) can not be bounded by $\|u(t)\|_{G^{0,0}}$.
On the other hand, for the periodic case, we have the global well-posedness in $H^s(\T)$ with $s\ge -1/2$ in \cite{Co}. 
Therefore, we can easily obtain the following result by combining the global well-posedness in $H^{\si_1-1/2}(\R/2\pi\al_1^{-1}\Z)$
and Theorem \ref{maintheorem}.
\begin{cor}\label{cor}
Assume {\upshape (A)}.
Let $r>0$ and $T>0$.
Then there exists $\e=\e(r,T)>0$ which satisfies the following:
if initial data $f\in G^{\si,-1/2}$ satisfies $f=g+h$, $g\in B_r(H^{\si_1-1/2}(\R/2\pi\al_1^{-1}\Z))$, $h\in B_\e(G^{\si,-1/2})$ and
$g$ is real valued,
then the solution of \eqref{KDVI} obtained in Theorem \ref{maintheorem} can be extended on $t \in [-T ,T]$.  
\end{cor}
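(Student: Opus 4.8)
The plan is to write the solution in the form $u=v+w$, where $v$ is a \emph{global} solution of \eqref{KDV} carrying the large periodic part $g$ of the initial data, and $w$ solves the equation for the difference with the small data $h$, and then to propagate the smallness of $w$ across $[-T,T]$ by iterating the local theory of Theorem \ref{maintheorem} finitely many times.

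First I would invoke global well-posedness of \eqref{KDV} in $H^{\si_1-1/2}(\R/2\pi\al_1^{-1}\Z)$. Under {\upshape (A)} every component of an element of $\La_s$ is nonnegative (it is a convex combination of the $d_j$, each of which has nonnegative entries), so in particular $\si_1-1/2\ge -1/2$; since $g$ is real valued, the global well-posedness result of \cite{Co} (stated for period $2\pi$, the general period being obtained by scaling) gives a solution $v$ with $v(t)$ being $2\pi\al_1^{-1}$-periodic in $x$ for every $t$ and $v\in C(\R;H^{\si_1-1/2}(\R/2\pi\al_1^{-1}\Z))$. By Remark \ref{R1} (applied with $f_1=v(t)$ and $f_j=0$ for $j\ge 2$) we have $v(t)\in G^{\si,-1/2}$ with $\|v(t)\|_{G^{\si,-1/2}}\lec \|v(t)\|_{\Hd^{\si_1-1/2}(\R/2\pi\al_1^{-1}\Z)}$; moreover, since the KdV modulation weight is the same in both settings, the restriction of $v$ to $[-T,T]$ lies in the Fourier restriction norm space $X$ used in the proof of Proposition \ref{mainprop}, with $\|v\|_{X_{[-T,T]}}\le K=K(r,T)$, a bound coming from iterating the local theory underlying \cite{Co}.

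Next I would set up the equation for $w=u-v$, namely
\EQQ{
w_t+w_{xxx}+2(vw)_x+(w^2)_x=0,\qquad w(0)=h,
}
that is, $w(t)=e^{-t\p_x^3}h-\int_0^t e^{-(t-t')\p_x^3}\p_x\bigl(2vw+w^2\bigr)(t')\,dt'$. The bilinear estimate on which the proof of Proposition \ref{mainprop} rests is bilinear, so it applies verbatim to each of the products $vw$ and $w^2$; the only new feature compared with \eqref{KDVI} is the term $2(vw)_x$, which is linear in $w$ but carries the coefficient $v$, whose norm $K$ may be large. Running the contraction argument of Theorem \ref{maintheorem} on a subinterval $I=[t_0-\ta,t_0+\ta]\subset[-T,T]$, the contribution of $2(vw)_x$ is controlled by $C\,\ta^{\te}K\,\|w\|_{X_I}$ for some $\te>0$ produced by the time localization; choosing $\ta=\ta(r,T)>0$ with $C\,\ta^{\te}K\le 1/2$ absorbs this term into the left-hand side, and the resulting fixed point $w$ then obeys $\|w\|_{X_I}\le C_0\,\|w(t_0)\|_{G^{\si,-1/2}}$ for an \emph{absolute} constant $C_0\ge 1$, provided $\|w(t_0)\|_{G^{\si,-1/2}}$ remains below the fixed smallness threshold $\de>0$ of Theorem \ref{maintheorem}. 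In particular $\|w(t_0\pm\ta)\|_{G^{\si,-1/2}}\le C_0\,\|w(t_0)\|_{G^{\si,-1/2}}$.

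Finally I would iterate this step over the $\lceil T/\ta\rceil$ subintervals of length $\le\ta$ covering $[0,T]$, and likewise over $[-T,0]$: after all steps the $G^{\si,-1/2}$-norm of $w$ has been multiplied by at most $C_0^{\lceil T/\ta\rceil}$, a quantity depending only on $r$ and $T$. Hence, setting
\EQQ{
\e=\e(r,T):=C_0^{-\lceil T/\ta(r,T)\rceil}\,\de,
}
the hypothesis $h\in B_\e(G^{\si,-1/2})$ forces $\|w(t)\|_{G^{\si,-1/2}}<\de$ throughout $[-T,T]$, so the iteration never breaks down and $w$, hence $u=v+w$, is defined on all of $[-T,T]$; by the uniqueness part of Theorem \ref{maintheorem} this $u$ coincides with the local solution of \eqref{KDVI} with data $f=g+h$ on its maximal interval of existence, which therefore reaches $[-T,T]$. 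The one point that genuinely requires care — and the reason the statement is quantitative in $r$ and $T$ rather than purely qualitative — is that the local existence time for the $w$-equation degenerates as the coefficient $v$ in the linear term $2(vw)_x$ grows; everything else is the standard continuation bookkeeping, the crucial feature being that the per-step amplification factor $C_0$ of $\|w\|_{G^{\si,-1/2}}$ is an absolute constant, so only $\approx T/\ta$ steps are needed and the total amplification stays finite.
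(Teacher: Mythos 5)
Your proposal is correct and follows essentially the same route as the paper: decompose $u=v+w$ with $v$ the global real-valued periodic solution from \cite{Co} regarded as an element of $G^{\si,-1/2}$, run the local contraction for the difference equation $w_t+w_{xxx}+\p_x\bigl(w(w+2v)\bigr)=0$ on subintervals whose length is dictated by the size of $v$, and take $\e$ exponentially small in the number of steps $\sim T/\ta$. The paper performs exactly this bookkeeping, with the condition $\delta^{\theta}\|w+2v\|_{Z^{\si,-1/2}_{[(j-1)\delta,\,j\delta]}}\ll 1$ playing the role of your absorption of the term $2(vw)_x$.
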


The main tool of the proof of Theorem \ref{maintheorem} is the Fourier restriction norm method.
We define the Fourier restriction norm as follows:
\EQQ{
Z^{\si,a}&=\{ u(t) \in \Sp(\R: {G}^{\si,a})  \,|\, \|{u}\|_{{Z}^{\si,a}}<\I \},\\
\|u\|_{Z^{\si,a}}&=\|{u}\|_{{X}^{\si,a,1/2}}+\|{u}\|_{{Y}^{\si,a,0}},\\
\|u\|_{X^{\si,a,b}}&=\|\LR{\ta+(\al\cdot k)^3}^b \ti{u}\|_{\ha{G}^{\si,a} L^2_\ta},\\
\|u\|_{Y^{\si,a,b}}&=\|\LR{\ta+(\al\cdot k)^3}^b \ti{u}\|_{\ha{G}^{\si,a} L^1_\ta},
}
where $\ti{u}=\mathcal{F}_t \ha{u}$.
Note that ${Z}^{\si,a}$ is continuously embedded in $C_t(\R : {G}^{\si,a})$.

In the proof of Theorem \ref{maintheorem}, the following bilinear estimates play an important role.
\begin{prop}\label{MainEst}
Assume {\upshape (A)}.
Let $T>0$.
Then, there exists $\e>1/8$ such that
the following estimates hold for any $u,v$ satisfying $\mathrm{supp}\, u, \mathrm{supp}\, v \subset [-T,T]\cross\R$:
\EQS{
\| (uv)_x \|_{X^{\si,-1/2,-1/2}} \lec T^\e \|u\|_{X^{\si,-1/2,1/2}}\|v\|_{X^{\si,-1/2,1/2}},\label{e41}\\
\| (uv)_x \|_{Y^{\si,-1/2,-1}} \lec T^\e \|u\|_{X^{\si,-1/2,1/2}}\|v\|_{X^{\si,-1/2,1/2}}.\label{e42}
}
\end{prop}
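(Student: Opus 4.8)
The plan is to reduce the bilinear estimates to a weighted convolution estimate on the Fourier side, following the Kenig--Ponce--Vega approach for the periodic KdV but keeping careful track of the homogeneous weight $|\al\cdot k|^{-1/2}$ and the product Sobolev weights $\prod_j \LR{k_j}^{\si_j}$. First I would dispense with the factor $T^\e$: by standard time-localization lemmas for Bourgain spaces (writing the product against a smooth cutoff of $[-T,T]$ and using that $\|\psi_T u\|_{X^{\si,a,b'}} \lec T^{b-b'}\|u\|_{X^{\si,a,b}}$ for $-1/2 < b' < b < 1/2$, together with the $Y$-norm analogue), one gains a genuine positive power of $T$ with exponent slightly above $1/8$; this is where the ``$\e > 1/8$'' comes from, so I would fix $b = 1/2$, $b' $ slightly below $1/2$ and track the loss. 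The heart of the matter is then the untruncated estimates, for which I pass to Fourier variables: with $n = \al\cdot k$, $n_1 = \al\cdot k_1$, $n_2 = \al\cdot k_2$ and $n = n_1 + n_2$, and dual time variables $\ta = \ta_1 + \ta_2$, the estimate \eqref{e41} becomes the claim that
\EQQ{
\Big\| \frac{|n|^{1/2}\, \prod_j \LR{k_j}^{\si_j}}{\LR{\ta + n^3}^{1/2}} \sum_{\substack{k_1 + k_2 = k \\ \ta_1 + \ta_2 = \ta}} \frac{\widehat F_1(k_1,\ta_1)}{|n_1|^{-1/2}\prod_j\LR{(k_1)_j}^{\si_j}\LR{\ta_1+n_1^3}^{1/2}} \cdot \frac{\widehat F_2(k_2,\ta_2)}{|n_2|^{-1/2}\prod_j\LR{(k_2)_j}^{\si_j}\LR{\ta_2+n_2^3}^{1/2}} \Big\|_{\ell^2 L^2}
}
is bounded by $\|F_1\|_{\ell^2 L^2}\|F_2\|_{\ell^2 L^2}$, and similarly for \eqref{e42} with one $L^2_\ta$ replaced by $L^1_\ta$ and the outer weight exponent $1/2$ replaced by $1$.

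The key algebraic input is the resonance identity $n^3 - n_1^3 - n_2^3 = 3 n\, n_1\, n_2$ (valid since $n = n_1 + n_2$), which forces $\max(\LR{\ta+n^3}, \LR{\ta_1+n_1^3}, \LR{\ta_2+n_2^3}) \gec |n\, n_1\, n_2|$. I would split into the standard three regions according to which modulation is largest. When $\LR{\ta+n^3}$ dominates, its weight $\LR{\ta+n^3}^{-1/2}$ in the numerator supplies a gain $|n\,n_1\,n_2|^{-1/2}$, which after combining with the homogeneous weights $|n|^{1/2}|n_1|^{1/2}|n_2|^{1/2}$ in the numerator leaves a clean expression; one then applies Cauchy--Schwarz in $(k_1,\ta_1)$ and is reduced to bounding $\sum_{k_1+k_2=k}\int \LR{\ta_1+n_1^3}^{-1}\LR{\ta_2+n_2^3}^{-1} d\ta_1$, where the $\ta_1$-integral converges to something like $\LR{\ta+n^3 - 3nn_1n_2}^{-1}\log(\cdots)$ — this is the usual elementary calculus lemma. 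When instead $\LR{\ta_1+n_1^3}$ (say) dominates, one pairs its gain with the homogeneous factors and, after Cauchy--Schwarz, must estimate a sum in the output variable $k$; here the product Sobolev structure enters: the ratio $\prod_j \LR{k_j}^{\si_j} / (\prod_j\LR{(k_1)_j}^{\si_j}\prod_j\LR{(k_2)_j}^{\si_j})$ needs to be controlled, and since $k_j = (k_1)_j + (k_2)_j$ componentwise this is handled coordinate by coordinate using $\LR{a+b}^{\si} \lec \LR{a}^{\si}\LR{b}^{|\si|}$ type bounds together with assumption (A), which is exactly the hypothesis $\si \in \La_s$, $s > (N-1)/2$ guaranteeing enough total regularity $\sum_j \si_j = s$ to run the summation.

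The main obstacle I anticipate is precisely the interplay between the one-dimensional homogeneous weight $|n|^{-1/2} = |\al\cdot k|^{-1/2}$ — which is what makes the quasi-periodic geometry work, since $\al\cdot k$ can be arbitrarily small while the individual $k_j$ stay bounded — and the $N$-dimensional lattice summation. Unlike the periodic case, a small denominator $|n_1|$ or $|n_2|$ does not force $k_1$ or $k_2$ to be small, so one cannot trivially discard low-frequency interactions; the homogeneous weight must be spent carefully, and the point is that in every modulation regime the factor $|n\,n_1\,n_2|^{1/2}$ coming from the resonance identity exactly matches (or dominates) the three half-powers $|n|^{1/2}|n_1|^{1/2}|n_2|^{1/2}$ needed to cancel the $|n_i|^{-1/2}$ weights, leaving a purely lattice-and-time estimate that closes under (A). I expect the worst case to be ``high $\times$ high $\to$ low'' in the lattice variables combined with the largest modulation being the output one, where both the $\log$ divergence from the $\ta_1$-integral and the lattice summation in $k$ must be absorbed simultaneously; controlling this is where the strict inequalities in (A) and the slack in $\e > 1/8$ get used.
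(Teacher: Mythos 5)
Your outer structure matches the paper's: the resonance identity $3nn_1n_2$, the three-region split by largest modulation, the observation that half the dominant modulation exactly converts the derivative plus the three homogeneous weights $|n|^{-1/2},|n_1|^{-1/2},|n_2|^{-1/2}$, and the gain of a power of $T$ by lowering the modulation exponent on functions supported in $[-T,T]$ (the paper's Lemma \ref{LE3}). The genuine gap is in what you are left with after this cancellation. You assert it is ``a purely lattice-and-time estimate that closes under (A)'' via Cauchy--Schwarz, the elementary $\ta'$-integral lemma, and coordinate-wise comparisons of the weights $\prod_j\LR{k_j}^{\si_j}$. It does not close this way. After Cauchy--Schwarz one must control, uniformly in $(\ta,k)$, a sum over $k'\in\Zd^N$ in which the only help beyond the weights is the factor $\LR{\ta+(\al\cdot k)^3-3(\al\cdot k)(\al\cdot k')(\al\cdot(k-k'))}^{-1}$. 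Under (A) the total regularity satisfies $s>(N-1)/2$ but is below $N/2$ (and at the extreme points of $\La_s$ one coordinate carries no weight at all), so the weight comparison $\prod_j\LR{k_j}^{2\si_j}\lec\prod_j\LR{k'_j}^{2\si_j}\LR{k_j-k'_j}^{2\si_j}$ leaves a sum $\sum_{k'}\prod_j\min(\LR{k'_j},\LR{k_j-k'_j})^{-2\si_j}$ that diverges; one must genuinely use the resonance factor to count lattice points. In one dimension this is the usual KPV counting for a quadratic polynomial in $k'_1$, but its leading coefficient here is $3\al_1^2(\al\cdot k)$, which in the quasi-periodic setting can be arbitrarily small even for large $k$ --- exactly the small-divisor phenomenon you flag in your last paragraph but never resolve. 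Saying the half-powers ``exactly match'' fixes the weight bookkeeping, not the summability.

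The paper's proof supplies precisely the missing machinery, and it is not routine: (i) an interpolation reduction to the extreme points $\si=sd_j/(N-1)$ of $\La_s$, so that $N-1$ coordinates have $\si_j>1/2$ and can be summed by the algebra-type estimates of Lemma \ref{LE4} (including the separate treatment of the region $|\al\cdot k|<1$, using $\#\{k_1:|\al\cdot k|<1\}\sim 1$); (ii) the key Lemma \ref{MainLem}, a one-dimensional KPV-type bilinear estimate with real shifts $P,Q$ (the frozen contributions of the other coordinates), whose implicit constant must be uniform in $P,Q\in\R$, and which handles the degenerate set $3\al_1^2|\al_1k_1+P+Q|\le 1$ --- where the quadratic counting fails --- by noting it contains only $O(\al_1^{-3})$ values of $k_1$ and estimating there by a cruder $\ell^1$--$\sup$ argument; and (iii) the resulting bilinear estimates \eqref{BE1}--\eqref{BE3} in $X^{\si,0,b}$, $Y^{\si,0,-1/2}$ with $b>3/8$, to which the three-region argument is then reduced (your treatment of the $Y$-norm estimate \eqref{e42} is also only gestured at; the paper needs Lemma \ref{LE40} and an extra Cauchy--Schwarz in $\ta$ there). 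Without an argument replacing (i)--(ii), your proof of the core estimate, e.g. $\|uv\|_{X^{\si,0,0}}\lec\|u\|_{X^{\si,0,b}}\|v\|_{X^{\si,0,b}}$, is missing its main step. A minor additional remark: lowering the exponent from $1/2$ to $b>3/8$ on a single factor yields $T^{\e'}$ only with $\e'<1/8$, so your claim that the localization ``gains an exponent slightly above $1/8$'' does not follow from the mechanism you describe.
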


Finally, we give some notations.
We will use $A\lesssim B$ to denote an estimate of the form $A \le CB$ for some constant $C$ and write $A \sim B$ to mean $A \lesssim B$ and $B \lesssim A$.
Implicit constants may depend on $\si, b, \al$ and $N$.
For a Banach space $X$, We define $B_r(X)=\{ f\in X \,|\, \|f\|_X \le r \}$.
Let $\chi(t)$ be a smooth function such that $\chi(t)=1$ for $|t|<1$ and $\chi(t)=0$ for $|t|>2$ and $\chi_T(t)=\chi(t/T)$.
$\mathcal{F}_t$ (resp. $\mathcal{F}_x$) is the Fourier transform with respect to $t$ (resp. $x$).
The rest of this paper is planned as follows.
In Section 2, we give some notations and preliminary lemmas.
In Section 3, we prove bilinear estimates.
In Section 4, we prove Theorem \ref{maintheorem} and Corollary \ref{cor}.
In Section 5, we prove an ill-posedness result.

\section*{Acknowledgement}
A part of this paper was written while the author was visiting the University of Toronto.
He would like to thank James Colliander and the Department of Mathematics at the University of Toronto
for their hospitality and his helpful comments.
He also would like to thank Tadahiro Oh for his helpful comments.
This research was supported by KAKENHI (22740088) and by FY 2011 Researcher Exchange Program between JSPS and NSERC.


\section{preliminary lemmas}

\begin{lem}\label{LE3}
Assume {\upshape (A)}.
Let $a \in \R$, $\e >\e'>0$ and  $T>0$.
Then, it follows that
\EQ{
\| u \|_{X^{\si,a,1/2-\e}} \lec T^{\e'}\|u\|_{X^{\si,a,1/2}}
}
for any $u$ satisfying $\mathrm{supp}\, u \subset [-T,T]\cross \R$.
\end{lem}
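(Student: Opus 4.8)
The plan is to strip off the spatial weights, reduce to a one--dimensional time--localization inequality in the $\ta$--variable, and prove that inequality by interpolating $H^{1/2}_t$ down to $L^2_t$.

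Since $\chi_T\equiv 1$ on $[-T,T]$, the support hypothesis gives $u=\chi_T u$; in particular $t\mapsto\ha u(k,t)$ is supported in $[-T,T]$ for every $k\in\Zd^N$. Put $w_k=|\al\cdot k|^a\prod_{j=1}^N\LR{k_j}^{\si_j}$, and for fixed $k$ set $g_k(\sigma):=\ti u(k,\sigma-(\al\cdot k)^3)$, whose inverse $t$--transform is a modulation $e^{i(\al\cdot k)^3 t}\ha u(k,t)$ of $\ha u(k,\cdot)$ and hence is again the $t$--Fourier transform of a function supported in $[-T,T]$. After the substitution $\sigma=\ta+(\al\cdot k)^3$ one has, for every $b$,
\[
\|u\|_{X^{\si,a,b}}^2=\sum_{k\in\Zd^N}w_k^2\,\big\|\LR{\sigma}^b g_k\big\|_{L^2_\sigma}^2=\sum_{k\in\Zd^N}w_k^2\,\|g_k\|_{H^b_t}^2 .
\]
Hence it suffices to prove the scalar bound $\|g\|_{H^{1/2-\e}_t}\lec T^{\e'}\|g\|_{H^{1/2}_t}$ with an implicit constant uniform over all $g$ whose $t$--support lies in $[-T,T]$, and then sum in $k$ against the $w_k$.

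For the scalar bound the key observation is the elementary inequality $\|g\|_{L^2_t}\lec T^{1/2}\|g\|_{\Hd^{1/2}_t}$ for $g$ supported in $[-T,T]$. By the rescaling $g\mapsto g(T\,\cdot)$, under which $\|\cdot\|_{L^2_t}$ acquires a factor $T^{1/2}$ while $\|\cdot\|_{\Hd^{1/2}_t}$ is scale invariant, one reduces to $T=1$; then, splitting frequencies at $|\sigma|=1/8$ and using $\|\widehat g\|_{L^\infty_\sigma}\le\|g\|_{L^1_t}\le\sqrt{2}\,\|g\|_{L^2_t}$,
\[
\|g\|_{L^2_t}^2\le\tfrac14\|\widehat g\|_{L^\infty_\sigma}^2+8\!\int|\sigma|\,|\widehat g(\sigma)|^2\,d\sigma\le\tfrac12\|g\|_{L^2_t}^2+8\,\|g\|_{\Hd^{1/2}_t}^2 ,
\]
so that $\|g\|_{L^2_t}\lec\|g\|_{\Hd^{1/2}_t}$. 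Combining this with the log--convexity bound $\|g\|_{H^{1/2-\e}_t}\le\|g\|_{L^2_t}^{2\e}\|g\|_{H^{1/2}_t}^{1-2\e}$ (H\"older on the Fourier side) and with $\|g\|_{\Hd^{1/2}_t}\le\|g\|_{H^{1/2}_t}$ yields $\|g\|_{H^{1/2-\e}_t}\lec T^{\e}\|g\|_{H^{1/2}_t}$. For $T\le1$ this is $\lec T^{\e'}\|g\|_{H^{1/2}_t}$ since $\e>\e'$ forces $T^{\e-\e'}\le1$; for $T\ge1$ one simply uses $\|g\|_{H^{1/2-\e}_t}\le\|g\|_{H^{1/2}_t}\le T^{\e'}\|g\|_{H^{1/2}_t}$. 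Summing over $k$ gives Lemma \ref{LE3}.

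The one step I expect to need more than routine bookkeeping is the endpoint $b=1/2$: a sharp time cutoff is not bounded on $H^{1/2}_t$, so one cannot directly estimate $\|\chi_T g\|_{H^{1/2}_t}$ by $\|g\|_{H^{1/2}_t}$ and read off a power of $T$. Passing through $L^2_t$ by interpolation, together with the Bernstein--type inequality $\|g\|_{L^2_t}\lec T^{1/2}\|g\|_{\Hd^{1/2}_t}$ for $g$ supported in $[-T,T]$, is exactly what resolves this, and is also why the conclusion is comfortably stated with $\e'<\e$ (indeed the argument even gives $\e'=\e$).
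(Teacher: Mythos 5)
Your argument is correct, but it proves the key scalar inequality by a different route than the paper. Both proofs reduce, by conjugating with the free evolution (your modulation $e^{i(\al\cdot k)^3t}\ha u(k,t)$, the paper's $\LR{\p_t}^{b}\exp(-t\p_x^3)u$), to the one-dimensional statement $\|g\|_{H^{1/2-\e}_t}\lec T^{\e'}\|g\|_{H^{1/2}_t}$ for $g$ supported in $[-T,T]$, with a constant uniform in $k$. The paper proves this by writing $g=\chi_T g$, passing to the Fourier side, splitting the weight $\LR{\ta}^{1/2-\e}$ over the convolution $\mathcal{F}\chi_T*\mathcal{F}g$, and applying Young's inequality with exponents $1/p=\e-\e'+1/2$, $1/q=1-\e+\e'$, so that $\|\LR{\ta}^{1/2-\e}\mathcal{F}\chi_T\|_{L^p}\lec T^{\e'}$; the strict loss $\e'<\e$ is forced by the degeneration of these exponents at $\e'=\e$. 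You instead avoid multiplying by any cutoff: you prove the scale-invariant Poincar\'e/Bernstein-type bound $\|g\|_{L^2}\lec T^{1/2}\|g\|_{\Hd^{1/2}}$ for $\supp g\subset[-T,T]$ by rescaling to $T=1$ and absorbing the low-frequency piece via $\|\mathcal{F}g\|_{L^\I}\lec\|g\|_{L^1}\lec\|g\|_{L^2}$, and then interpolate $H^{1/2-\e}$ between $L^2$ and $H^{1/2}$ by H\"older on the Fourier side. This buys a slightly sharper conclusion ($T^{\e}$ rather than $T^{\e'}$, i.e. the endpoint $\e'=\e$, at least for $\e\le 1/2$), at the price that the H\"older step $\|g\|_{H^{1/2-\e}}\le\|g\|_{L^2}^{2\e}\|g\|_{H^{1/2}}^{1-2\e}$ requires $2\e\le 1$; for $\e>1/2$ you should first replace $\e$ by some $\e_0\in(\e',1/2]$ using the monotonicity of the $X^{\si,a,b}$ norms in $b$, which covers every case in which the lemma is actually invoked (the paper's own proof carries analogous implicit restrictions, e.g. $\e-\e'\le 1/2$ from the Young exponents). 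Also note the $2\pi$ normalization in your absorption step ($\|\mathcal{F}g\|_{L^\I}\le\|g\|_{L^1}$ versus Plancherel as an isometry) is harmless but should be fixed once and the frequency cutoff chosen accordingly.
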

\begin{proof}
Let $\mathrm{supp}\, g(t) \subset [-T,T]$.
Then, by the Plancherel theorem and the triangle inequality, we have
\EQQ{
 \|g\|_{H^{1/2-\e}} = \|\chi_T g\|_{H^{1/2-\e}}
  \lec \|(\LR{\tau}^{1/2-\e}|\mathcal{F}\chi_T|)*|\mathcal{F}g| \|_{L^2_\tau}+\| |\mathcal{F}\chi_T|*(\LR{\tau}^{1/2-\e}|\mathcal{F}g|) \|_{L^2_\tau}
}
By the Young inequality and the direct calculation, the first term is bounded by
\EQQ{
\|\LR{\tau}^{1/2-\e} \mathcal{F}\chi_T \|_{L_\tau^p} \|\mathcal{F}g \|_{L_\tau^q} 
  \lec T^{\e'}\|\LR{\tau}^{1/2}\mathcal{F}g \|_{L_\tau^2},
}
where $1/p=\e-\e'+1/2$ and $1/q=1-\e+\e'$.
In the same manner, the second term is bounded by
\EQQ{
\| \mathcal{F}\chi_T \|_{L_\tau^p} \|\LR{\tau}^{1/2-\e}\mathcal{F}g \|_{L_\tau^q}
  \lec T^{\e'}\|\LR{\tau}^{1/2}\mathcal{F}g \|_{L_\tau^2},
}
where $1/p=1-\e'$ and $1/q=\e'+1/2$.
Therefore, we obtain
\EQQ{
\| u \|_{X^{\si,a,1/2-\e}} &=\|\LR{\p_t}^{1/2-\e} \exp(-t\p_x^3) u \|_{G^{\si,a}L^2_t}\\
&\lec T^{\e'}\|\LR{\p_t}^{1/2} \exp(-t\p_x^3) u \|_{G^{\si,a}L^2_t}=T^{\e'}\|u\|_{X^{\si,a,1/2}}.
}
\end{proof}
Lemmas \ref{LE4}, \ref{LE40} are variants of the Sobolev inequality.
\begin{lem}\label{LE4}
(i) Let $\min_{1 \le j \le N} \{ \si_j\} >1/2$.
Then, it follows that
\EQ{
\|uv\|_{G^{\si,0}}\lec \|u\|_{G^{\si,0}}\|v\|_{G^{\si,0}}.\label{e39}
}
(ii) Let $\si_1=0, \min_{2 \le j \le N} \{ \si_j\} >1/2$ and $a>1/4$.
Then, it follows that
\EQ{
\Big\|\sum_{k'\in \Zd^N,k'\neq k}\LR{|\al\cdot k||\al\cdot(k-k')||\al\cdot k'|}^{-a} |\ha{u}(k-k')| |\ha{v}(k')|\Big\|_{\ha{G}^{\si,0}}\lec \|u\|_{G^{\si,0}}\|v\|_{G^{\si,0}}.\label{e40}
}
\end{lem}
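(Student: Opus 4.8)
Both estimates reduce to statements about convolutions on $\Z^N$. Since $\al\cdot k\neq0$ for every $k\in\Zd^N$, the map $k\mapsto\al\cdot k$ is injective on $\Z^N$; hence $uv$ again has Fourier support in $\{\al\cdot k:k\in\Z^N\}$, with $\widehat{uv}(l)=(\ha u*\ha v)(l)=\sum_{k'}\ha u(l-k')\ha v(k')$. Writing $U(m):=\prod_{j=1}^N\LR{m_j}^{\si_j}|\ha u(m)|$ (so $\|U\|_{l^2}=\|u\|_{G^{\si,0}}$, and the first factor is trivial when $\si_1=0$) and likewise $V$, \eqref{e39} becomes the bound
\EQQ{
\Big\|\prod_{j=1}^N\LR{l_j}^{\si_j}(\ha u*\ha v)(l)\Big\|_{l^2_l(\Z^N)}\lec\|U\|_{l^2}\|V\|_{l^2},
}
and \eqref{e40} is the same kind of bound with the additional multiplier $\Lambda(l,k'):=\LR{|\al\cdot l|\,|\al\cdot(l-k')|\,|\al\cdot k'|}^{-a}$ inside the $k'$-sum. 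Using $\LR{l_j}^{\si_j}\lec\LR{(l-k')_j}^{\si_j}+\LR{k'_j}^{\si_j}$ for $\si_j\ge0$ and expanding the product, the task splits into finitely many weighted convolution estimates in which each coordinate's Sobolev weight has been assigned either to $l-k'$ or to $k'$.

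For (i) I would show, by induction on $N$, that $\|\prod_{j}\LR{\cdot_j}^{\si_j}(F*G)\|_{l^2(\Z^N)}\lec\|\prod_{j}\LR{\cdot_j}^{\si_j}F\|_{l^2}\|\prod_{j}\LR{\cdot_j}^{\si_j}G\|_{l^2}$ when $\si_j>1/2$ for every $j$. The case $N=1$ is the classical Sobolev algebra inequality on $\T$: after the split $\LR{\x}^{\si}\lec\LR{\x-\y}^{\si}+\LR{\y}^{\si}$ and Young's inequality, it reduces to $\|h\|_{l^1(\Z)}\lec\|\LR{\cdot}^{\si}h\|_{l^2}$, which holds because $\si>1/2$ makes $\LR{\cdot}^{-\si}\in l^2(\Z)$. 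For the inductive step I would slice off the last coordinate: with $F_m:=F(\cdot,m)$ one has $(F*G)(\x',\x_N)=\sum_{\y_N}\big(F_{\x_N-\y_N}*_{N-1}G_{\y_N}\big)(\x')$, so the $l^2_{\x'}$-norm of each slice is estimated by the inductive hypothesis, and the remaining sum over $\x_N$ by the $N=1$ estimate with values in the Banach algebra $\bigotimes_{j<N}H^{\si_j}(\T)$. Undoing the slicing proves \eqref{e39}.

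For (ii) the first coordinate has no weight, so the induction breaks there and the factor $\Lambda$ must compensate. After peeling off coordinates $2,\dots,N$ as in (i), the core is a one-dimensional–type sum in the first coordinate still carrying $\Lambda$, which I would treat by a case split according to which of $|\al\cdot l|$, $|\al\cdot(l-k')|$, $|\al\cdot k'|$ is smallest. Since $\al\cdot l=\al\cdot(l-k')+\al\cdot k'$, the two largest of these three are always comparable — to some $L$ — and the product is comparable to $L^2$ times the smallest. On the generic region, where the smallest is $\gec1$, one has $\Lambda\lec\LR{\al\cdot(l-k')}^{-2a}$ (and equally $\lec\LR{\al\cdot k'}^{-2a}$ or $\LR{\al\cdot l}^{-2a}$); the quantitative point is that $a>1/4$ gives $4a>1$, so $\sum_{n\in\Z}\LR{\al_1 n+c}^{-4a}\lec1$ uniformly in $c\in\R$, which lets the smoothing be absorbed into the first-coordinate summation, and together with the $l^2$-summability of $\prod_{j\ge2}\LR{\cdot_j}^{-2\si_j}$ on $\Z^{N-1}$ a Young-type estimate closes the bound. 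On the complementary region, where the smallest of $|\al\cdot l|,|\al\cdot(l-k')|,|\al\cdot k'|$ is $\ll1$, the corresponding lattice point ($l$, $l-k'$, or $k'$) is confined, in its first coordinate, to an $O(1)$ set once its other coordinates are fixed, so that summation variable contributes only a bounded factor and the estimate again reduces to a convolution of $l^2$ sequences with the available $\si_j>1/2$ ($j\ge2$) decay.

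The main obstacle I anticipate is the last point: $\Lambda$ is not tensorial, since $\al\cdot l$, $\al\cdot(l-k')$ and $\al\cdot k'$ each mix all $N$ lattice coordinates. Thus the ``peel off coordinates $2,\dots,N$, then handle the first'' scheme must be carried out carefully — after fixing the higher coordinates, $\al\cdot l$, $\al\cdot(l-k')$ and $\al\cdot k'$ become affine functions of the first coordinate with $l_2,\dots,l_N,k_2',\dots,k_N'$-dependent shifts, and the degenerate ``near-hyperplane'' sub-cases, which have no analogue in the periodic case $N=1$, have to be controlled uniformly in those shifts. It is exactly in reconciling these sub-cases that the hypotheses $a>1/4$, $\si_1=0$ and $\min_{2\le j\le N}\si_j>1/2$ are simultaneously used.
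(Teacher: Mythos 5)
Your proposal is correct and follows essentially the same route as the paper: weighted Cauchy--Schwarz with a kernel made summable by $2\si_j>1$ in the coordinates $j\ge 2$ and by $4a>1$ in the first coordinate (where $|\al\cdot(k-k')||\al\cdot k'|$ is a product of two affine functions of $k_1'$), plus the counting bound $\#\{k_1\in\Z : |\al\cdot k|<1\}\lec 1$ for the degenerate region, exactly the ingredients of the paper's $F_1$/$F_2$ splitting. The only cosmetic differences are that you prove (i) by coordinate-slicing induction with Young's inequality instead of a single Cauchy--Schwarz with the summable kernel $\prod_j\LR{k_j}^{2\si_j}\LR{k_j-k_j'}^{-2\si_j}\LR{k_j'}^{-2\si_j}$, and in (ii) you split according to which of the three frequencies is smallest, whereas the paper splits only on $|\al\cdot k|\gtrless 1$ and absorbs small input frequencies automatically through $\LR{\cdot}\ge 1$ in the kernel.
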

\begin{proof}
By the Schwarz inequality, we have
\EQQ{
\|uv\|^2_{G^{\si,0}} &\leq \sum_{k\in\Zd^N}\prod_{j=1}^N\LR{k_j}^{2\si_j}\Big|\sum_{k'\in\Zd^N,\, k'\neq k} \ha{u}(k-k')\ha{v}(k') \Big|^2\\
&\leq C_1(\si) \sum_{k\in\Zd^N}\sum_{k'\in\Zd^N,\, k'\neq k} \prod_{j=1}^N\LR{k_j-k_j'}^{2\si_j}\LR{k_j'}^{2\si_j} \Big|\ha{u}(k-k')\ha{v}(k') \Big|^2\\
&\leq C_1(\si) \|u\|^2_{G^{\si,0}}\|v\|^2_{G^{\si,0}},
}
where
\EQQ{
C_1(\si)=\sup_{k\in\Zd^N} \sum_{k'\in\Zd^N,\, k'\neq k} \prod_{j=1}^N\LR{k_j}^{2\si_j}\LR{k_j-k_j'}^{-2\si_j}\LR{k_j'}^{-2\si_j}.
}
Since $C_1(\si)< \infty$, we obtain \eqref{e39}.
Put
\EQQ{
F(k)&=\sum_{k'\in \Zd^N,\, k'\neq k}\LR{|\al\cdot k||\al\cdot(k-k')||\al\cdot k'|}^{-a} |\ha{u}(k-k')| |\ha{v}(k')|,\\
F_1(k)&=F(k)\big|_{|\al\cdot k|\ge 1},\qquad F_2(k)=F(k)\big|_{|\al\cdot k|<1}.
}
In the same manner as the proof of \eqref{e39}, we obtain
\EQ{
\| F_1 \|_{\ha{G}^{\si,0}}\lec \|u\|_{G^{\si,0}}\|v\|_{G^{\si,0}}\label{e80}
}
since
\EQQ{
&C_2(\si,a)\\
=&\sup_{|\al\cdot k|\ge 1} \sum_{k'\in\Zd^N,\, k'\neq k}  \LR{|\al\cdot k||\al\cdot(k-k')||\al\cdot k'|}^{-2a}\prod_{j=2}^N\LR{k_j}^{2\si_j}\LR{k_j-k_j'}^{-2\si_j}\LR{k_j'}^{-2\si_j}\\
\lec& \sum_{k'\in\Zd^N,\, k'\neq k}  \LR{|\al\cdot(k-k')||\al\cdot k'|}^{-2a}
  \prod_{j=2}^N \min \{ \LR{k_j-k_j'}, \LR{k_j'} \}^{-2\si_j}< \infty.
}
Because $\#\{ k_1\in\Z \,|\, |\al\cdot k|<1 \}\sim 1$ for any $(k_2,\cdots,k_N)\in \Z^{N-1}$, we have
\EQ{\label{ee1}
\| F_2 \|^2_{\ha{G}^{\si,0}}&\lec \sum_{k \in \Zd^N} \prod_{j=2}^N \LR{k_j}^{2\si_j}\Big(\sum_{k'\in\Zd^N,\, k'\neq k} |\ha{u}(k-k')||\ha{v}(k')| \Big)^2 \Big|_{|\al\cdot k|<1}\\
&\lec \sup_{k_1 \in \Z} \sum_{(k_2,\cdots,k_N) \in \Z^{N-1}} \prod_{j=2}^N \LR{k_j}^{2\si_j}\Big(\sum_{k'\in\Zd^N,\, k'\neq k} |\ha{u}(k-k')||\ha{v}(k')| \Big)^2.
}
By the Schwarz inequality, we have
\EQ{\label{ee2}
&\Big( \sum_{k'\in\Z^N,\, k'\neq k} |\ha{u}(k-k')||\ha{v}(k')|\Big)^2\\
\lec& \Big(\sum_{(k'_2,\cdots, k'_N)\in \Z^{N-1}} \|\ha{u}(k-k')\|_{l^2_{k'_1}}\|\ha{v}(k')\|_{l^2_{k'_1}}\Big)^2\\
\lec& \sum_{(k'_2,\cdots, k'_N)\in \Z^{N-1}}  \prod_{j=2}^N \LR{k_j-k'_j}^{-2\si}\LR{k_j'}^{-2\si_j}\\
  &\times  \sum_{(k'_2,\cdots, k'_N)\in \Z^{N-1}} \Big( \prod_{j=2}^N \LR{k_j-k'_j}^{2\si}\LR{k_j'}^{2\si_j} \Big) \|\ha{u}(k-k')\|^2_{l^2_{k'_1}} \|\ha{v}(k')\|^2_{l^2_{k'_1}}.
}
Here we have
\EQ{\label{ee3}
\sup_{(k_2,\cdots,k_N) \in \Z^{N-1}}\sum_{(k'_2,\cdots, k'_N)\in Z^{N-1}} \prod_{j=2}^N \LR{k_j}^{2\si_j}\LR{k_j-k'_j}^{-2\si_j}\LR{k'_j}^{-2\si_j} \lec 1.
}
Therefore, from \eqref{ee1}--\eqref{ee3}, we obtain
\EQ{\label{e81}
&\| F_2 \|^2_{\ha{G}^{\si,0}}\\
\lec &\sum_{(k_2,\cdots,k_N) \in \Z^{N-1}}
\sum_{(k'_2,\cdots, k'_N)\in \Z^{N-1}}  \Big( \prod_{j=2}^N \LR{k_j-k'_j}^{2\si_j}\LR{k_j'}^{2\si_j}\Big)\|\ha{u}(k-k')\|^2_{l^2_{k'_1}} \|\ha{v}(k')\|^2_{l^2_{k'_1}}\\
\lec &\|u\|^2_{G^{\si,0}}\|v\|^2_{G^{\si,0}}.
}
Collecting \eqref{e80} and \eqref{e81}, we obtain \eqref{e40}.
\end{proof}
\begin{lem}\label{LE40}
Let $A, B, C\in \R, \min\{a,b,c\} \ge 0, \min\{a+b, b+c, c+a \}> 1/2$ and $a+b+c>1$.
Then, it follows that
\EQ{
\|\LR{\cdot +A}^{-a} f*g\|_{L^1} \lec \|\LR{\cdot +B}^b f\|_{L^2}\|\LR{\cdot +C}^c g\|_{L^2}\label{e99}
}
where the implicit constant depends only on $a,b$ and $c$.
\end{lem}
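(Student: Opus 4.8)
The plan is to reduce \eqref{e99} to an elementary $L^2\times L^2$ bound for a convolution integral carrying three weights, and then to dispatch that bound by a case analysis on which weight is largest. First I would write $|(f*g)(\tau)|\le\int_{\R}|f(\tau-\tau')||g(\tau')|\,d\tau'$, substitute $\sigma=\tau-\tau'$ (a translation in $\tau$ for fixed $\tau'$, hence measure preserving), and set $F(\sigma)=\LR{\sigma+B}^{b}|f(\sigma)|$ and $G(\tau')=\LR{\tau'+C}^{c}|g(\tau')|$, so that $\|F\|_{L^2}=\|\LR{\cdot+B}^{b}f\|_{L^2}$ and $\|G\|_{L^2}=\|\LR{\cdot+C}^{c}g\|_{L^2}$. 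After this substitution \eqref{e99} follows once one proves
\EQQ{
&\int_{\R}\int_{\R}\LR{\sigma+\tau'+A}^{-a}\LR{\sigma+B}^{-b}\LR{\tau'+C}^{-c}F(\sigma)G(\tau')\,d\sigma\,d\tau'\\
&\qquad\lec\|F\|_{L^2}\|G\|_{L^2}
}
for all nonnegative $F,G\in L^2(\R)$ (if either norm on the right is infinite there is nothing to prove).

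To prove this I would split $\R^2$ into the three (overlapping) regions in which $\LR{\sigma+\tau'+A}$, $\LR{\sigma+B}$, or $\LR{\tau'+C}$ is the largest of the three weights and treat the pieces separately. On the first region I would use the trivial inequality $X^{-\theta}\le Y^{-\theta}$ for $X\ge Y>0$, $\theta\ge0$, to bound $\LR{\sigma+\tau'+A}^{-a}\le\LR{\sigma+B}^{-a_1}\LR{\tau'+C}^{-a_2}$ for a splitting $a_1+a_2=a$ with $a_1,a_2\ge0$; the remaining two weights then depend on the separate variables $\sigma$ and $\tau'$, so the double integral factors and each factor is estimated by the Cauchy--Schwarz inequality together with $\int_{\R}\LR{y}^{-2\theta}\,dy<\infty$ for $\theta>1/2$, which needs $a_1+b>1/2$ and $a_2+c>1/2$. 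On the second region I would instead bound $\LR{\sigma+B}^{-b}\le\LR{\sigma+\tau'+A}^{-b_1}\LR{\tau'+C}^{-b_2}$ with $b_1+b_2=b$, reducing matters to $\int\int\LR{\sigma+\tau'+A}^{-(a+b_1)}\LR{\tau'+C}^{-(c+b_2)}F(\sigma)G(\tau')\,d\sigma\,d\tau'$, which is handled by Cauchy--Schwarz first in $\sigma$ (using translation invariance of $\|\LR{\cdot+\tau'+A}^{-(a+b_1)}\|_{L^2_\sigma}$, which needs $a+b_1>1/2$) and then in $\tau'$ (which needs $c+b_2>1/2$); the third region is symmetric to the second. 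In particular the shifts $A,B,C$ never enter the estimates, by translation invariance of Lebesgue measure.

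It then remains to check that the exponents can be split as required. In the first region one needs $a_1,a_2\ge0$, $a_1+a_2=a$, $a_1>\tfrac12-b$, $a_2>\tfrac12-c$; the interval of admissible $a_1$ is nonempty precisely because $a+b>1/2$, $a+c>1/2$ and $a+b+c>1$ (and in the degenerate case $a=0$ the hypotheses force $b,c>1/2$, so $a_1=a_2=0$ works), and the same hypotheses settle the other two regions in the same way. Since the region decomposition and all the exponent splittings depend only on $a,b,c$, so do the implicit constants, as asserted. The only point requiring a little care is this final bookkeeping near the boundary cases, but as the inequalities in the hypotheses are strict there is always an open range of admissible splittings, so no genuine obstacle arises.
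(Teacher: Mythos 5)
Your proof is correct. It takes a somewhat different route from the paper's: the paper reduces \eqref{e99} to the same bilinear form, but then applies the Schwarz inequality once, jointly in $(\ta,\ta')$, so that the whole estimate collapses to the single assertion that $\int\!\!\int \LR{\ta+A}^{-2a}\LR{\ta-\ta'+B}^{-2b}\LR{\ta'+C}^{-2c}\,d\ta'\,d\ta$ is bounded by a constant depending only on $a,b,c$ (this kernel bound is stated without further detail, together with the observation that $\|f(\ta-\ta')g(\ta')\|_{L^2_\ta L^2_{\ta'}}=\|f\|_{L^2}\|g\|_{L^2}$). You instead avoid the joint Cauchy--Schwarz and prove the bilinear bound directly: you split the plane into the three regions where one of the weights $\LR{\sigma+\ta'+A}$, $\LR{\sigma+B}$, $\LR{\ta'+C}$ dominates, distribute the exponent of the dominant weight over the other two, and close each region by one-variable Cauchy--Schwarz and translation invariance, with the exponent bookkeeping showing exactly where each hypothesis $a+b>1/2$, $b+c>1/2$, $c+a>1/2$, $a+b+c>1$, $\min\{a,b,c\}\ge 0$ is used. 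In effect your case analysis is the verification that the paper's asserted double-integral bound requires, so your argument is more self-contained (and makes the uniformity in $A,B,C$ transparent), while the paper's is shorter by deferring that computation; both yield the same statement with the same constant dependence.
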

\begin{proof}
\eqref{e99} is equivalent to
\EQQ{
\| \int \LR{\ta+A}^{-a}\LR{\ta-\ta'+B}^{-b}f(\ta-\ta') \LR{\ta'+C}^{-c}g(\ta') \,d\ta' \|_{L^1} \lec \| f\|_{L^2}\| g\|_{L^2}
}
By using the Schwarz inequality, the left hand-side is bounded by
\EQQ{
\| \LR{\ta+A}^{-a}\LR{\ta-\ta'+B}^{-b} \LR{\ta'+C}^{-c}\|_{L^2_{\ta}L^2_{\ta'}} \| f(\ta-\ta')g(\ta')\|_{L^2_{\ta}L^2_{\ta'}}
\lec  \| f\|_{L^2}\| g\|_{L^2}.
}
Here, we used
\EQQ{
\int\int \LR{\ta+A}^{-2a}\LR{\ta-\ta'+B}^{-2b}\LR{\ta'+C}^{-2c}\,d\ta'd \ta \le C
}
where the constant depends only on $a,b$ and $c$.
\end{proof}

\begin{lem}\label{LE5}
Let $\ta, \ta' \in \R$, $k, k' \in \Z^N$.
Then, it follows that
\EQ{
M:&=\max \{ |\ta+(\al\cdot k)^3|, |\ta-\ta'+(\al\cdot (k-k'))^3|, |\ta'+(\al\cdot k')^3|\}\\
  &\gec |\al\cdot k||\al\cdot (k-k')||\al\cdot k'|.
}
\end{lem}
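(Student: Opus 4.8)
The plan is to reduce the inequality to the elementary algebraic identity that underlies the KdV resonance function. Write $a=\al\cdot k$, $b=\al\cdot(k-k')$ and $c=\al\cdot k'$. Since the map $m\mapsto \al\cdot m$ is linear on $\Z^N$, we have $a=b+c$, and expanding the cube gives
\[
a^3-b^3-c^3=(b+c)^3-b^3-c^3=3b^2c+3bc^2=3bc(b+c)=3abc .
\]
This is the only computation in the proof; everything else is bookkeeping.

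Next I would line up the three phases whose maximum is $M$. Set
\[
A_1=\ta+(\al\cdot k)^3,\qquad A_2=\ta-\ta'+(\al\cdot(k-k'))^3,\qquad A_3=\ta'+(\al\cdot k')^3 .
\]
The point is that the time-frequency variables $\ta,\ta'$ occur in the combination $A_1-A_2-A_3$ with total coefficient $0$ (the coefficient of $\ta$ is $1-1-0=0$ and that of $\ta'$ is $0-(-1)-1=0$), so only the cubic terms survive:
\[
A_1-A_2-A_3=(\al\cdot k)^3-(\al\cdot(k-k'))^3-(\al\cdot k')^3=3\,(\al\cdot k)(\al\cdot(k-k'))(\al\cdot k'),
\]
by the identity above.

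Finally I would invoke the triangle inequality: since each of $|A_1|,|A_2|,|A_3|$ is at most $M$,
\[
3\,|\al\cdot k|\,|\al\cdot(k-k')|\,|\al\cdot k'|=|A_1-A_2-A_3|\le |A_1|+|A_2|+|A_3|\le 3M,
\]
and dividing by $3$ yields $M\gec |\al\cdot k|\,|\al\cdot(k-k')|\,|\al\cdot k'|$ with implicit constant $1$, as claimed. There is no real obstacle here — the content is entirely the observation that $a^3-b^3-c^3=3abc$ when $a=b+c$. The lemma is of interest only for what it enables downstream: in the bilinear estimates of Proposition \ref{MainEst} it lets one transfer the product of the three spatial-frequency weights $|\al\cdot k|,|\al\cdot(k-k')|,|\al\cdot k'|$ onto one of the modulation weights $\LR{\ta+(\al\cdot k)^3}$, which is exactly what the homogeneous weight $a=-1/2$ in $G^{\si,a}$ is designed to absorb.
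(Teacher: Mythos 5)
Your proof is correct and is essentially the paper's own argument: the paper disposes of this lemma in one line via the triangle inequality and the identity $a^3-(a-b)^3-b^3=3ab(a-b)$, which is exactly the cancellation of the $\ta,\ta'$ terms and the identity $a^3-b^3-c^3=3abc$ for $a=b+c$ that you spell out. Nothing more is needed.
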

The proof of Lemma \ref{LE5} follows from the triangle inequality and $a^3-(a-b)^3-b^3=3ab(a-b)$.
The following lemmas are standard tools of the Fourier restriction norm.
For the proofs, see e.g. \cite{Gi2}.
\begin{lem}\label{LE1}
Assume {\upshape (A)}.
Let $a\in \R$.
Then, for any $f$, it follows that
\EQQ{
\|\chi(t) e^{ -t\p_x^3} f\|_{Z^{\si,a}}\lec \|f\|_{G^{\si,a}}.
}
\end{lem}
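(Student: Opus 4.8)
The plan is a direct computation: this is the standard statement that the time-truncated free evolution lies in the Bourgain-type space $Z^{\si,a}$, and the only input needed beyond unwinding the definitions is that $\chi\in C_c^\infty(\R)\subset\mathcal{S}(\R)$, so that $\mathcal{F}_t\chi$ is rapidly decreasing. First I would compute the space–time Fourier transform of $u:=\chi(t)e^{-t\p_x^3}f$. Writing $f=\sum_{k\in\Zd^N}\ha f(k)\exp(i\al\cdot k\,x)$, the linear flow multiplies the $k$-th Fourier coefficient by a unimodular phase, whose $t$-Fourier transform is a translate of $\mathcal{F}_t\chi$; concretely $\ti u(k,\ta)=\mathcal{F}_t\chi\big(\ta+(\al\cdot k)^3\big)\,\ha f(k)$, so that the modulation weight $\LR{\ta+(\al\cdot k)^3}$ appearing in the $X$- and $Y$-norms becomes, after the change of variables $\ta\mapsto\ta-(\al\cdot k)^3$ performed separately for each $k$, simply $\LR{\ta}$ acting on $\mathcal{F}_t\chi$, with no residual dependence on $k$.

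Carrying this out for the $X^{\si,a,1/2}$ component: for fixed $k$,
\[
\big\|\LR{\ta+(\al\cdot k)^3}^{1/2}\ti u(k,\cdot)\big\|_{L^2_\ta}=\big\|\LR{\ta}^{1/2}\mathcal{F}_t\chi\big\|_{L^2_\ta}\,|\ha f(k)|=:C_\chi\,|\ha f(k)|,
\]
with $C_\chi<\I$ because $\mathcal{F}_t\chi$ decays faster than any power. Since the spatial weight $|\al\cdot k|^a\prod_{j=1}^N\LR{k_j}^{\si_j}$ is independent of $\ta$, applying the $\ha G^{\si,a}$-norm in $k$ then gives $\|u\|_{X^{\si,a,1/2}}=C_\chi\|f\|_{G^{\si,a}}$. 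The $Y^{\si,a,0}$ component is treated identically, the only changes being that the modulation exponent is $0$ and the inner norm is $L^1_\ta$; the same translation reduces the $\ta$-integral to $\|\mathcal{F}_t\chi\|_{L^1_\ta}<\I$, whence $\|u\|_{Y^{\si,a,0}}\lec\|f\|_{G^{\si,a}}$. Adding the two bounds yields $\|u\|_{Z^{\si,a}}\lec\|f\|_{G^{\si,a}}$.

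There is essentially no obstacle here. The only facts to verify are $\LR{\ta}^{1/2}\mathcal{F}_t\chi\in L^2_\ta$ and $\mathcal{F}_t\chi\in L^1_\ta$, both immediate from $\chi\in C_c^\infty(\R)$, and that the mixed norms $\ha G^{\si,a}L^2_\ta$ and $\ha G^{\si,a}L^1_\ta$ may be rearranged so that the $\ta$-operations act first — harmless since the $k$-weight is $\ta$-independent (and for the $L^2_\ta$ norm the two orders agree by Fubini in any case). Note that assumption (A) plays no role in this lemma: it is only needed so that $G^{\si,a}$ is a well-defined space of tempered distributions, and the argument above is uniform in $a\in\R$ exactly as stated.
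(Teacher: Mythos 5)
Your proof is correct: the explicit computation $\ti u(k,\ta)=\mathcal{F}_t\chi(\ta\pm(\al\cdot k)^3)\ha f(k)$, the translation in $\ta$ for each fixed $k$, and the resulting constants $\|\LR{\ta}^{1/2}\mathcal{F}_t\chi\|_{L^2_\ta}$ and $\|\mathcal{F}_t\chi\|_{L^1_\ta}$ are exactly the standard argument, and your remark that (A) and the value of $a$ are irrelevant to the estimate is accurate. The paper does not write out a proof at all (it cites Ginibre--Tsutsumi--Velo as a standard tool of the Fourier restriction norm method), and that reference's proof is essentially the computation you gave, so there is nothing to add.
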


\begin{lem}\label{LE2}
Assume {\upshape (A)}.
Let $a\in \R$.
Then, for any  $F$, it follows that 
\EQQ{
\| \chi(t)\int_0^t e^{-(t-t')\p_x^3} F(t') \, dt' \|_{Z^{\si,a}}
  \lec \|F\|_{X^{\si,a,-1/2}}+\|F\|_{Y^{\si,a,-1}}.
}
\end{lem}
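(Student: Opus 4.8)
This is a standard continuity property of the Fourier restriction norm (see \cite{Gi2}); I indicate the route. The plan is to diagonalize in the spatial frequency $k$, conjugate away the free propagator so that the modulation weight $\LR{\ta+(\al\cdot k)^3}$ becomes the flat weight $\LR\ta$, and thereby reduce the whole statement to a single one–dimensional estimate in $t$ that is uniform in $k$. Since the $\ha{G}^{\si,a}$ norm is a weighted $\ell^2$ sum over $k$ whose weights $|\al\cdot k|^a\prod_j\LR{k_j}^{\si_j}$ depend only on $k$, once the $k$–uniform estimate is in hand it may simply be square–summed back in; assumption (A) will be used only to guarantee $G^{\si,a}\subset\Sp$, so that the Fourier manipulations below make sense.

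For the reduction, write $\ha F(k,t)$ for the $k$–th spatial Fourier coefficient of $F$. Since $e^{-(t-t')\p_x^3}$ is a Fourier multiplier in $x$,
\EQQ{
\F_x\Big[\chi(t)\int_0^t e^{-(t-t')\p_x^3}F(t')\,dt'\Big](k)=\chi(t)\int_0^t e^{i(t-t')(\al\cdot k)^3}\ha F(k,t')\,dt'.
}
Setting $\mu=(\al\cdot k)^3$ and $H(t)=e^{-it\mu}\ha F(k,t)$, the right–hand side equals $e^{it\mu}\,\chi(t)\int_0^t H(t')\,dt'$. Multiplication by $e^{it\mu}$ is an isometry on $H^b_t$ and on $\{g:\F_t g\in L^1\}$ and converts the weight $\LR{\ta+(\al\cdot k)^3}$ into $\LR\ta$; hence, after also extracting the $k$–weight, the whole statement follows from the $k$–independent inequality: writing $UH(t):=\chi(t)\int_0^t H(t')\,dt'$,
\EQQ{
\|\LR\ta^{1/2}\F_t(UH)\|_{L^2_\ta}+\|\F_t(UH)\|_{L^1_\ta}\lec \|\LR\ta^{-1/2}\F_t H\|_{L^2_\ta}+\|\LR\ta^{-1}\F_t H\|_{L^1_\ta}.
}

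To prove this I would use $\int_0^t H(t')\,dt'=\int_\R\F_t H(\ta)\,\frac{e^{it\ta}-1}{i\ta}\,d\ta$ and split $H=H_{lo}+H_{hi}$ with $\F_tH_{lo}=\1_{|\ta|\le1}\F_tH$. For $UH_{hi}$ I write $\frac{e^{it\ta}-1}{i\ta}=\frac{e^{it\ta}}{i\ta}-\frac1{i\ta}$, so that $\F_t(UH_{hi})$ equals $\int_{|\ta|>1}\frac{\F_tH(\ta)}{i\ta}\F_t\chi(\cdot-\ta)\,d\ta$ minus a constant multiple of $\F_t\chi$, the constant being $\lec\|\LR\ta^{-1}\F_tH\|_{L^1_\ta}$; since $|\ta|^{-1}\sim\LR\ta^{-1}$ on $|\ta|>1$, the bound $\LR\sigma^{1/2}\lec\LR\ta^{1/2}\LR{\sigma-\ta}^{1/2}$ together with the Schwartz decay of $\F_t\chi$ and Young's inequality ($L^2_\ta * L^1_\ta\to L^2_\sigma$, $L^1_\ta*L^1_\ta\to L^1_\sigma$) controls both norms of $UH_{hi}$ by the right–hand side. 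For $UH_{lo}$ the singular factor is removed by $\frac{e^{it\ta}-1}{i\ta}=t\int_0^1 e^{ist\ta}\,ds$, turning $UH_{lo}$ into $\int_{|\ta|\le1}\F_tH(\ta)\big(\int_0^1\F_t(t\chi)(\cdot-s\ta)\,ds\big)d\ta$; as $|s\ta|\le1$ and $\F_t(t\chi)$ is Schwartz, its norms are $\lec\int_{|\ta|\le1}|\F_tH(\ta)|\,d\ta$, which is bounded both by $\|\LR\ta^{-1/2}\F_tH\|_{L^2_\ta}$ (Cauchy–Schwarz on the unit interval) and by $\|\LR\ta^{-1}\F_tH\|_{L^1_\ta}$.

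The one genuinely delicate point is the high–modulation term: one must use simultaneously the gain $\LR\ta^{-1}$ from dividing by $\ta$ and the rapid decay of $\F_t\chi$ in order to pay for the output weight $\LR\ta^{1/2}$ that appears in the $X^{\si,a,1/2}$ part of $Z^{\si,a}$; packaging this as a single convolution (Schur) estimate is the crux of the lemma. By contrast the low–modulation (near–resonant) region, where $1/\ta$ is singular, is elementary once the identity above removes the singularity, and either half of the source norm $\|F\|_{X^{\si,a,-1/2}}+\|F\|_{Y^{\si,a,-1}}$ dominates its contribution on its own. Since nothing after the first reduction depends on $\mu=(\al\cdot k)^3$, the final $\ell^2_k$ summation against the weights $|\al\cdot k|^a\prod_j\LR{k_j}^{\si_j}$ is automatic, and Lemma \ref{LE2} follows.
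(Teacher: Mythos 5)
Your proof is correct and follows essentially the same route as the argument the paper relies on: the paper does not prove Lemma \ref{LE2} itself but defers to the standard proof in \cite{Gi2}, which is exactly your reduction (conjugation by the free group so the modulation weight $\LR{\ta+(\al\cdot k)^3}$ becomes $\LR{\ta}$, uniformly in $k$, followed by the low/high splitting of $\frac{e^{it\ta}-1}{i\ta}$, the removal of the singularity near $\ta=0$, and Young's inequality, with the $Y^{\si,a,-1}$ norm absorbing the non-square-integrable $|\ta|^{-1}$ tail). The only loose phrase is ``multiplication by $e^{it\mu}$ is an isometry on $H^b_t$'' --- it is rather an isometry from the $\LR{\ta+(\al\cdot k)^3}^b$-weighted space onto the flat $\LR{\ta}^b$-weighted space, which is what your computation actually uses, so this is a slip of wording and not a gap.
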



\section{proof of the bilinear estimates}
Our aim in this section is to prove Proposition \ref{MainEst}.
The proof of the following lemma is almost similar to that of Theorem 1.2 in \cite{Ke}.
The difference is the presence of $P, Q$.
It is crucial that the implicit constant in \eqref{e28} does not depend on $P, Q$ to apply Lemma \ref{MainLem} for the proof of Lemma \ref{BiEstLem}.

\begin{lem}\label{MainLem}
Let $\al_1 \neq 0$ and $b>3/8$.
Then, the following estimate holds for any $P, Q \in \R$ and
any functions $\ti{u}(\tau, k_1), \ti{v}(\tau, k_1)$ on $\R \cross \Z$:
\EQ{\label{e28}
&\| |\ti{u}|*_{\tau,k_1}|\ti{v}| \|_{\ell^2_{k_1}L^2_\tau}\\
\lec &\|\LR{\tau+(\al_1 k_1+P)^3}^b\ti{u}(\tau,k_1)\|_{\ell^2_{k_1}L^2_\tau}
  \|\LR{\tau+(\al_1 k_1+Q)^3}^b\ti{u}(\tau,k_1)\|_{\ell^2_{k_1}L^2_\tau},
}
where the implicit constant depends only on $\al_1$ and $b$.
\end{lem}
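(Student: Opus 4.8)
The plan is to read \eqref{e28} as a (symbol-translated) periodic bilinear $L^2$ estimate and to run the argument from the proof of Theorem~1.2 of \cite{Ke} --- a dyadic decomposition in the modulation variables, the Cauchy--Schwarz inequality, and elementary lattice-point counting --- tracking the only new feature: the two dispersion symbols are translated by $P$ and $Q$, so the relevant resonant output frequency is $\al_1 k_1+P+Q$ rather than $\al_1 k_1$. Set $X=\al_1 k_1'+P$, $Y=\al_1(k_1-k_1')+Q$, $Z=X+Y=\al_1 k_1+P+Q$, and $\mu_1=\ta'+X^3$, $\mu_2=\ta-\ta'+Y^3$, $\mu_0=\ta+Z^3$; the identity behind Lemma~\ref{LE5}, namely $Z^3-X^3-Y^3=3XYZ$, gives $\mu_1+\mu_2=\mu_0-3XYZ$. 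Since replacing $\ti u,\ti v$ by $|\ti u|,|\ti v|$ changes neither side of \eqref{e28}, I would first localise dyadically, writing $|\ti u|=\sum_{N_1}|\ti u_{N_1}|$ with $\LR{\mu_1}\sim N_1$ on $\supp\ti u_{N_1}$ and $|\ti v|=\sum_{N_2}|\ti v_{N_2}|$ with $\LR{\mu_2}\sim N_2$ on $\supp\ti v_{N_2}$, so that the $N_1$- and $N_2$-pieces of the right-hand side of \eqref{e28} are $\sim N_1^{b}\|\ti u_{N_1}\|_{\ell^2_{k_1}L^2_\ta}$ and $\sim N_2^{b}\|\ti v_{N_2}\|_{\ell^2_{k_1}L^2_\ta}$. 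It then suffices to prove, with constant depending only on $\al_1$ and uniformly in $P,Q$,
\[
\|\,|\ti u_{N_1}|*_{\ta,k_1}|\ti v_{N_2}|\,\|_{\ell^2_{k_1}L^2_\ta}\lesssim \min(N_1,N_2)^{1/2}\bigl((N_1+N_2)^{1/4}+1\bigr)\|\ti u_{N_1}\|_{\ell^2_{k_1}L^2_\ta}\|\ti v_{N_2}\|_{\ell^2_{k_1}L^2_\ta},
\]
because summing this against $N_1^{-b}N_2^{-b}$ via the Cauchy--Schwarz inequality in the dyadic parameters closes precisely when $b>3/8$, the borderline term being $\sum_N N^{3/2-4b}$; this is exactly the exponent in the statement.

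To prove the displayed piece estimate I would split the output sum $\sum_{k_1}$ according to whether $|Z(k_1)|=|\al_1 k_1+P+Q|\ge 1$ or $<1$. In the generic range $|Z(k_1)|\ge 1$, for a fixed output point $(\ta,k_1)$ the summand vanishes unless $|\ta+Z^3-3XYZ|=|\mu_1+\mu_2|\le|\mu_1|+|\mu_2|\lesssim N_1+N_2$, which confines $XY=X(Z-X)$ to an interval of length $\lesssim(N_1+N_2)/|Z|\lesssim N_1+N_2$; since $X\mapsto X(Z-X)$ is a downward parabola, $X$ is then trapped in at most two intervals of length $\lesssim(N_1+N_2)^{1/2}$, and each of them contains $\lesssim(N_1+N_2)^{1/2}/|\al_1|+1$ points of the arithmetic progression $\al_1\Z+P$ --- a count manifestly insensitive to the translation $P$. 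Combined with the bound $\lesssim\min(N_1,N_2)$ for the Lebesgue measure of the admissible $\ta'$-interval, this gives, uniformly in $\ta$ and in $k_1$ with $|Z(k_1)|\ge 1$,
\[
\bigl|\{(\ta',k_1'):\LR{\mu_1}\sim N_1,\ \LR{\mu_2}\sim N_2\}\bigr|\lesssim_{\al_1}\min(N_1,N_2)\bigl((N_1+N_2)^{1/2}+1\bigr),
\]
so a Cauchy--Schwarz inequality in $(\ta',k_1')$ followed by Fubini's theorem yields the desired bound for this part of the output sum.

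The hard part will be the range $|Z(k_1)|<1$: there $3XYZ$ degenerates, no gain is available from counting $k_1'$, and one runs into the familiar ``high $\times$ low $\to$ very low'' obstruction of the periodic theory --- now displaced by the shifts to the frequency $k_1\approx-(P+Q)/\al_1$ rather than sitting at $k_1=0$. The point is that this affects only boundedly many (at most $2/|\al_1|+1$) output frequencies, and for each of them I would argue frequency by frequency: each slice $|\ti u_{N_1}(\cdot,k_1')|$ is supported in a $\ta$-set of measure $\lesssim N_1$, so Young's inequality $L^1*L^2\hookrightarrow L^2$ together with the Cauchy--Schwarz inequality in $k_1'$ give $\|\,|\ti u_{N_1}|*_{\ta,k_1}|\ti v_{N_2}|\,\|_{L^2_\ta}\lesssim\min(N_1,N_2)^{1/2}\|\ti u_{N_1}\|_{\ell^2_{k_1}L^2_\ta}\|\ti v_{N_2}\|_{\ell^2_{k_1}L^2_\ta}$, which --- summed over the dyadic pieces (this needs only $b>1/4$) and over these $O(1)$ frequencies --- is more than enough. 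Thus the only new work compared with \cite{Ke} is carrying the two shifts through the counting, and since the sole arithmetic input (the number of points of $\al_1\Z+P$, resp.\ $\al_1\Z+Q$, in an interval of given length) is translation invariant, all the estimates come out uniformly in $P$ and $Q$, with implicit constants depending only on $\al_1$ and $b$.
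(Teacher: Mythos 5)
Your proposal is correct, and its skeleton matches the paper's: the same splitting of the output frequencies according to whether the shifted frequency $\al_1 k_1+P+Q$ is bounded away from zero, the same use of the identity $Z^3-X^3-Y^3=3XYZ$ (so that the resonance is quadratic in $k_1'$ with leading coefficient $3\al_1^2(\al_1k_1+P+Q)$), and the same remark that the only arithmetic input---the number of points of $\al_1\Z+P$ in an interval---is translation invariant, which is what makes the constants uniform in $P,Q$. The implementation differs, though. The paper does not decompose dyadically: on the region $3\al_1^2|\al_1k_1+P+Q|>1$ it applies Cauchy--Schwarz directly against the weights and reduces the estimate to $\sup_{\ta,k_1}\LR{\ta+(\al_1k_1+P)^3}^{-2b}*_{\ta,k_1}\LR{\ta+(\al_1k_1+Q)^3}^{-2b}\lec 1$, proved via the one-dimensional bound $\LR{\ta+\be}^{-2b}*_\ta\LR{\ta+\ga}^{-2b}\lec\LR{\ta+\be+\ga}^{\min\{1-4b,-2b\}}$ and the convergence of $\sum_{k_1'}\LR{\ta+3\al_1^2(\al_1k_1+P+Q)k_1'^2+\Gamma}^{\min\{1-4b,-2b\}}$, which is exactly where $b>3/8$ enters; your modulation-dyadic decomposition plus the parabola-preimage lattice count (at most two intervals of length $\lec(N_1+N_2)^{1/2}$, hence $\lec(N_1+N_2)^{1/2}+1$ admissible $k_1'$) is the unpacked form of that same sum and yields the same threshold---in fact your Schur-type dyadic summation even tolerates $b=3/8$, which the paper's direct summation does not. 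On the complementary $O(1)$ set of output frequencies the two arguments are again parallel: the paper uses Cauchy--Schwarz in $\ta$ with the uniformly bounded weight $J$ (needing only $4b>1$) and then Cauchy--Schwarz in $k_1'$, while you use the support-measure/Young bound, which is the same mechanism. Two cosmetic slips: you attached $P$ to $k_1'$ and $Q$ to $k_1-k_1'$, opposite to the pairing of the weights in \eqref{e28} (harmless by symmetry), and your ``borderline term $\sum_N N^{3/2-4b}$'' is a slightly garbled record of the diagonal contribution $N^{3/4-2b}$; the dyadic summation does close for $b>3/8$ as you claim.
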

\begin{proof}
We first consider the region $A=\big\{k_1 \in \Z \,\big|\, 3\al_1^2|\al_1k_1+P+Q|>1  \big\}$.
Applying the Schwartz inequality, we have
\EQQ{
|\ti{u}| *_{\tau,k_1}|\ti{v}|
\lec I(\tau,k_1)^{1/2} \big( \LR{\tau+(\al_1 k_1+P)^3}^{2b}|\ti{u}|^2 *_{\tau,k_1} \LR{\tau+(\al_1 k_1+Q)^3}^{2b} |\ti{v}|^2 \big)^{1/2}
}
where
\EQQ{
I(\tau,k_1) = \LR{\tau+(\al_1 k_1+P)^3}^{-2b} *_{\tau,k_1}\LR{\tau+(\al_1 k_1+Q)^3}^{-2b}.
}
If we have
\EQ{\label{e1}
\sup_{\tau\in \R, k_1\in A} I(\tau,k_1) \lec 1,
}
then we obtain
\EQQ{
&\| |\ti{u}|*_{\tau,k_1}|\ti{v}| \|_{\ell^2_{k_1}(A)L^2_\tau}\\
\lec &\| (\LR{\tau+(\al_1 k_1+P)^3}^{2b}|\ti{u}|^2)*_{\tau,k_1}
  ( \LR{\tau+(\al_1 k_1+Q)^3}^{2b}|\ti{v}|^2) \|^{1/2}_{\ell^1_{k_1}L^1_\tau}\\
\lec &\|\LR{\tau+(\al_1 k_1+P)^3}^b\ti{u}\|_{\ell^2_{k_1}L^2_\tau}
  \|\LR{\tau+(\al_1 k_1+Q)^3}^b\ti{v}\|_{\ell^2_{k_1}L^2_\tau}
}
from the Young inequality.
Therefore, we have only to check \eqref{e1}.
Since $\LR{\tau+\be}^{-2b}*_\tau \LR{\tau+\ga}^{-2b }\lec \LR{\tau+\be+\ga}^{\min\{1-4b, -2b\}}$,
we have
\EQQ{
  I(\tau,k_1)\lec \sum_{k_1'\in\Z}\LR{\tau+(\al_1 k'_1+Q)^3+(\al_1 (k_1-k'_1)+P)^3}^{\min\{1-4b, -2b\}}.
}
Note that 
\EQQ{
 &\tau+(\al_1 k'_1+Q)^3+(\al_1 (k_1-k'_1)+P)^3\\
=&\ta+3\al_1^2(\al_1k_1+P+Q)k_1'^2+\Gamma(\al_1,k_1,k'_1,P,Q),
}
and the order of $\Gamma (\al_1,k_1,k'_1,P,Q)$ with respect to $k_1'$ is less than $2$.
Because $\min\{1-4b, -2b\} < -1/2$, we obtain \eqref{e1}.

Next, we consider the region $A^c=\{k_1 \in \Z \,|\, 3\al_1^2|\al_1k_1+P+Q| \le 1  \}$.
Since $\# A^c \sim \al_1^{-3}$, it follows that
\EQ{
\| |\ti{u}|*_{\tau,k_1}|\ti{v}| \|_{\ell^2_{k_1}(A^c)L^2_\tau}
\lec & \al_1^{-3} \sup_{k_1\in\Z} \Big\| \sum_{k_1'} |\ti{u}(\tau,k_1-k'_1)|*_\tau |\ti{v}(\tau,k_1')| \Big\|_{L^2_\tau}\\
\lec & \al_1^{-3} \sup_{k_1\in\Z} \sum_{k_1'} \Big\| |\ti{u}(\tau,k_1-k'_1)|*_\tau |\ti{v}(\tau,k_1')| \Big\|_{L^2_\tau}.\label{e50}
}
Applying the Schwartz inequality, we have
\EQ{
&|\ti{u}(\tau,k_1-k'_1)|*_{\tau} |\ti{v}(\tau,k'_1)|\\
\lec &J(\tau,k,k'_1 )^{1/2} \Big((\LR{\tau+ ( \al_1 (k_1-k'_1 )+P)^3}^{2b}|\ti{u}(\tau,k_1-k'_1)|^2)\\
 &\hspace{13em} *_\tau
  (\LR{\tau+(\al_1 k'_1+Q)^3}^{2b} |\ti{v}(\tau,k'_1)|^2)\Big)^{1/2}\label{e51}
}
where
\EQQ{
J(\tau,k_1,k'_1) = \LR{\tau+(\al_1 (k_1-k'_1)+P)^3}^{-2b} *_\tau
  \LR{\tau+(\al_1 k'_1+Q)^3}^{-2b}.
}
Obviously,
\EQ{
\sup_{\tau\in \R, k_1 \in \Z, k'_1\in A^c} J(\tau,k_1,k'_1) \lec 1.\label{e52}
}
Therefore, from \eqref{e50}--\eqref{e52}, we obtain
\EQQ{
&\| |\ti{u}|*_{\tau,k_1}|\ti{v}| \|_{\ell^2_{k_1}(A^c)L^2_\tau}\\
\lec & \al_1^{-3} \sup_{k_1\in\Z} \sum_{k_1'} \Big\|\LR{\tau+(\al_1 (k_1-k'_1)+P)^3}^{2b}|\ti{u}(\tau,k_1-k'_1)|^2\\
  &\hspace*{17em}*_\tau \LR{\tau+(\al_1 k'_1+Q)^3}^{2b}|\ti{v}(\tau,k_1')|^2 \Big\|^{1/2}_{L^1_\tau}\\
\lec & \al_1^{-3} \sup_{k_1\in\Z} \sum_{k_1'} \Big\|\LR{\tau+(\al_1 (k_1-k'_1)+P)^3}^{b}|\ti{u}(\tau,k_1-k'_1)| \Big\|_{L^2_\tau}\\
  &\hspace*{17em}\cross \Big\|\LR{\tau+(\al_1 k'_1+Q)^3}^{b}|\ti{v}(\tau,k_1')| \Big\|_{L^2_\tau}\\
\lec &\al_1^{-3}\|\LR{\tau+(\al_1 k_1+P)^3}^b\ti{u}(\tau,k_1)\|_{\ell^2_{k_1}L^2_\tau}
  \|\LR{\tau+(\al_1 k_1+Q)^3}^b\ti{u}(\tau,k_1)\|_{\ell^2_{k_1}L^2_\tau}.
}
\end{proof}

\begin{lem}\label{BiEstLem}
Assume {\upshape (A)} and $b>3/8$.
Then, the following bilinear estimates hold:
\EQS{
\| uv \|_{X^{\si,0,0}} \lec \|u\|_{X^{\si,0,b}}\|v\|_{X^{\si,0,b}},\label{BE1}\\
\| uv \|_{X^{\si,0,-b}} \lec \|u\|_{X^{\si,0,b}}\|v\|_{X^{\si,0,0}},\label{BE2}\\
\| uv \|_{Y^{\si,0,-1/2}} \lec \|u\|_{X^{\si,0,b}}\|v\|_{X^{\si,0,b}},\label{BE3}
}
\end{lem}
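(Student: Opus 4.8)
The plan is to derive all three estimates from the one–dimensional bilinear estimate of Lemma~\ref{MainLem}, exploiting the product structure $\prod_{j=1}^{N}\LR{k_j}^{\si_j}$ of the $\ha G^{\si,0}$–weight. Throughout I replace $\ti u,\ti v$ by $|\ti u|,|\ti v|$, and I order the exponents $\si_1\le\si_2\le\cdots\le\si_N$. The only arithmetic fact used is that {\upshape(A)} forces $\si_j>1/4$ for every $j\ge 2$: from $\si_1\le\si_2$ and $\si_1+\si_2>1/2$ we get $\si_2>1/4$, hence $\si_j\ge\si_2>1/4$; only the index $j=1$ may have $\si_1$ as small as $0$, and that direction will be the one fed into Lemma~\ref{MainLem}.

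I would first prove \eqref{BE1}. Expanding $\widetilde{uv}(\ta,k)=\sum_{k'}|\ti u(\cdot,k-k')|*_\ta|\ti v(\cdot,k')|$ and using $\LR{a+b}\lec\LR{a}\LR{b}$ in each coordinate, I bound $\prod_j\LR{k_j}^{\si_j}\,\widetilde{uv}(\ta,k)$ by $\sum_{k'}\bigl(\prod_j\LR{k_j-k'_j}^{\si_j}|\ti u(\cdot,k-k')|\bigr)*_\ta\bigl(\prod_j\LR{k'_j}^{\si_j}|\ti v(\cdot,k')|\bigr)$. For the transverse indices $j\ge 2$ I apply the Cauchy–Schwarz inequality in $(k'_2,\dots,k'_N)$ exactly as in the proof of Lemma~\ref{LE4}: since $\si_j>1/4$, the factor $\sup_{k_j}\sum_{k'_j}\LR{k_j-k'_j}^{-2\si_j}\LR{k'_j}^{-2\si_j}$ is finite, so these weights disappear, and I am left, for each fixed $(k_2,\dots,k_N)$ and $(k'_2,\dots,k'_N)$, with a one–dimensional convolution in $(\ta,k_1)$ of $\LR{k_1}^{\si_1}|\ti u(\cdot,k_1,k-k')|$ and $\LR{k_1}^{\si_1}|\ti v(\cdot,k_1,k')|$. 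To it I apply Lemma~\ref{MainLem} with $P=\sum_{j\ge 2}\al_j(k_j-k'_j)$ and $Q=\sum_{j\ge 2}\al_j k'_j$, so that $\al_1 k_1+P$, $\al_1 k_1+Q$, $\al_1 k_1+P+Q$ become precisely the full phases $\al\cdot(k-k')$, $\al\cdot k'$, $\al\cdot k$; because the constant in \eqref{e28} is independent of $P,Q$, I may then sum the resulting weighted square–norms back over all transverse indices $(k_j)_{j\ge2}$ and $(k'_j)_{j\ge2}$, and the $k'$–sum factors off into $\|u\|_{X^{\si,0,b}}^2\,\|v\|_{X^{\si,0,b}}^2$. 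The hypothesis $b>3/8$ enters only through Lemma~\ref{MainLem}, and when $\si_1=0$ there is simply no weight in the $k_1$–direction and nothing changes.

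For \eqref{BE3} I would run the identical reduction, replacing the Young inequality in $\ta$ by Lemma~\ref{LE40}: with $A=(\al\cdot k)^3$, $B=(\al\cdot(k-k'))^3$, $C=(\al\cdot k')^3$, $a=1/2$ and $b=c$ equal to the exponent of the present lemma, the conditions $b+c>1/2$, $a+b>1/2$, $a+b+c>1$ all hold since $b>3/8>1/4$, so the $L^1_\ta$–factor $\LR{\ta+(\al\cdot k)^3}^{-1/2}$ is absorbed at the level of the $\ta$–integration, and the surviving $\ell^2_k$–summation is the one already performed for \eqref{BE1}. (Equivalently, $\|{\cdot}\|_{Y^{\si,0,-1/2}}\lec\|{\cdot}\|_{X^{\si,0,\e}}$ for small $\e>0$ by Cauchy–Schwarz in $\ta$, and one proves the $\e$–improved version of \eqref{BE1}; there is room for this, as $\min\{1-4b,-2b\}<-1/2$ strictly when $b>3/8$.)

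For \eqref{BE2} I would split $v$ according to its own modulation, $v=\sum_L v_L$ with $\widetilde{v_L}$ supported where $\LR{\ta+(\al\cdot k)^3}\sim L$. The part $L\sim 1$ satisfies $\|v_L\|_{X^{\si,0,b}}\lec\|v_L\|_{X^{\si,0,0}}$, so \eqref{BE1} gives the bound directly. For $L\gg 1$ one has $\|v_L\|_{X^{\si,0,b}}\sim L^{b}\|v_L\|_{X^{\si,0,0}}$, and I would case–split on which of the three modulations $|\ta+(\al\cdot k)^3|$, $|\ta-\ta'+(\al\cdot(k-k'))^3|$, $|\ta'+(\al\cdot k')^3|$ is largest: when the output modulation or the $u$–modulation dominates, the weight $\LR{\ta+(\al\cdot k)^3}^{-b}$, respectively the $b$–weight carried by $u$, produces a compensating $L^{-b}$, and a further $L^{-\e}$ is gained from the slack $b-\e>3/8$ in Lemma~\ref{MainLem}, so the dyadic sum converges by orthogonality of the $v_L$. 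The main obstacle is precisely the remaining region — where the unweighted factor $v$ carries the largest modulation while the output modulation is small, forcing (via Lemma~\ref{LE5}) $|\al\cdot k|\,|\al\cdot(k-k')|\,|\al\cdot k'|\sim L$ with no homogeneous weight available to exploit it; this is exactly the delicate interaction handled in \cite{Ke}, and closing it for general $\al$ will require the careful lattice–point count already built into the proof of Lemma~\ref{MainLem}.
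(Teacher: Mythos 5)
Your reduction to the $(\ta,k_1)$–convolution via Lemma \ref{MainLem} with $P=\sum_{j\ge2}\al_j(k_j-k'_j)$, $Q=\sum_{j\ge2}\al_jk'_j$ is indeed the paper's mechanism for \eqref{BE1}, but the way you dispose of the transverse directions contains a genuine error. The quantity you must control is not $\sup_{k_j}\sum_{k'_j}\LR{k_j-k'_j}^{-2\si_j}\LR{k'_j}^{-2\si_j}$ but $\sup_{k_j}\sum_{k'_j}\LR{k_j}^{2\si_j}\LR{k_j-k'_j}^{-2\si_j}\LR{k'_j}^{-2\si_j}$: the output norm $X^{\si,0,0}$ carries the weight $\prod_j\LR{k_j}^{\si_j}$, and after distributing it onto $u$ and $v$ the Cauchy--Schwarz constant is exactly the $C_1(\si)$ of Lemma \ref{LE4}, whose $j$-th factor behaves like $\sum_{k'_j}\min\{\LR{k_j-k'_j},\LR{k'_j}\}^{-2\si_j}$ and is finite only when $\si_j>1/2$, not $\si_j>1/4$ (the latter gives the product estimate into $L^2$ without the output weight, i.e.\ the algebra threshold is $1/2$). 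For general $\si\in\La_s$ this fails: with $N=2$ and $s$ slightly above $1/2$ one has $\si_1=\si_2=s/2\in(1/4,1/2]$. The paper avoids this by first reducing, via interpolation over the simplex $\La_s$, to the vertices $\si=sd_j/(N-1)$, where the distinguished direction has exponent $0$ and every transverse exponent equals $s/(N-1)>1/2$; this reduction (or some substitute exploiting only the sum condition $\sum_j\si_j=s$) is missing from your argument.

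The other two estimates are not closed either. For \eqref{BE3}, applying Lemma \ref{LE40} in $\ta$ first (with $a=1/2$, $b=c$) uses up all modulation weights before any $k$-summation; what survives is $\big\|\sum_{k'}\ha U(k-k')\ha V(k')\big\|_{\ha G^{\si,0}}$ with $\ha U(k)=\|\LR{\ta+(\al\cdot k)^3}^b\ti u\|_{L^2_\ta}$, and since $\si_1=0$ and there is no homogeneous weight, the $k_1$-sum does not close ($\ell^2\ast\ell^2\not\subset\ell^2$); in particular it is not ``the summation already performed for \eqref{BE1}'', because there the $k_1$-sum was handled jointly with $\ta$ by Lemma \ref{MainLem}. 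The paper's proof instead splits into the regions $\Omega_1,\Omega_2,\Omega_3$ according to the dominant modulation: in $\Omega_1$ it converts part of the weight $\LR{\ta+(\al\cdot k)^3}^{-1/2}$, via Lemma \ref{LE5}, into the factor $\LR{|\al\cdot k||\al\cdot(k-k')||\al\cdot k'|}^{1/8-b}$ and then uses Lemma \ref{LE4}(ii) (which exploits $\#\{k_1:|\al\cdot k|<1\}\sim1$ and needs exactly the exponent $b-1/8>1/4$), while in $\Omega_2$ it uses Cauchy--Schwarz in $\ta$ and \eqref{BE1}; your parenthetical fallback of an ``$\e$-improved \eqref{BE1}'' is unsubstantiated, since transferring $\LR{\ta+(\al\cdot k)^3}^{\e}$ onto the factors costs $(|\al\cdot k||\al\cdot(k-k')||\al\cdot k'|)^{\e}$, which the $X^{\si,0,b}$ norms cannot absorb. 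For \eqref{BE2} you concede the main case (output modulation small, the unweighted factor $v$ carrying the largest modulation) and leave it open; the paper settles \eqref{BE2} by duality, reducing it to $\|uv\|_{X^{-\si,0,0}}\lec\|u\|_{X^{\si,0,b}}\|v\|_{X^{-\si,0,b}}$, which is proved exactly like \eqref{BE1} using the dual estimate $\|uv\|_{G^{-\si,0}}\lec\|u\|_{G^{\si,0}}\|v\|_{G^{-\si,0}}$; after dualizing, the two $b$-weights sit on the two factors fed into Lemma \ref{MainLem}, so the interaction you could not close never appears.
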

\begin{proof}
Since $\si$ is an internal dividing point of $sd_j/(N-1)$,
we have the estimates for general $\si$ by interpolating among the estimates for $\si=sd_j/(N-1)$ with $j=1,\cdots,N$.
By symmetry, we have only to consider the case $\si=sd_1/(N-1)$, namely, $\si_1=0, \si_2=\cdots=\si_N=s/(N-1)>1/2$.

We first prove \eqref{BE1}.
The left hand-side of \eqref{BE1} is bounded by
\EQQ{
&\Big\| \prod_{j=2}^N \LR{k_j}^{\si_j} \sum_{k'\in\Zd^{N},\,k'\neq k}\int |\ti{u}(\ta-\ta',k-k')||\ti{v}(\ta',k')| d\ta'\Big\|_{\ell_k^2L_\tau^2} \lec\\
&\Big\| \prod_{j=2}^N \LR{k_j}^{\si_j} \sum_{k_2'\cdots k_N'} \Big\| \sum_{k_1'}\int |\ti{u}(\ta-\ta',k-k')||\ti{v}(\ta',k')| d\ta'\Big\|_{\ell_{k_1}^2L_\tau^2} \Big\|_{\ell^2_{k_2\cdots k_N}}.
}
Applying Lemma \ref{MainLem} with $P=\sum_{j=2}^N \al_j(k_j-k_j')$, $Q=\sum_{j=2}^N \al_jk_j$ and \eqref{e39} with respect to $k_2,\cdots,k_N$ variables,
we obtain \eqref{BE1}.

By the duality argument, \eqref{BE2} is equivalent to
\EQQ{
\| uv \|_{X^{-\si,0,0}} \lec \|u\|_{X^{\si,0,b}}\|v\|_{X^{-\si,0,b}},\\
}
which is obtained in the same manner as the proof of \eqref{BE1} if we apply the duality of \eqref{e39}:
\EQQ{
\|uv\|_{G^{-\si,0}}\lec \|u\|_{G^{\si,0}}\|v\|_{G^{-\si,0}}
}
instead of \eqref{e39}.

Finally, we prove \eqref{BE3}.
Divide $\R\times\Zd^N\times\R\times\Zd^N$ into the following three parts:
\EQQ{
\Omega_1&=\big\{(\ta,k,\ta',k') \,\big|\, |\ta+(\al\cdot k)^3|
  \ge \max \{ |\ta'+(\al\cdot k')^3)|, |\ta-\ta'+(\al\cdot(k-k'))^3|\} \big\} \,\\
\Omega_2&=\big\{(\ta,k,\ta',k') \,\big|\, |\ta'+(\al\cdot k')^3|
  \ge \max \{ |\ta+(\al\cdot k)^3)|, |\ta-\ta'+(\al\cdot(k-k'))^3|\} \big\},\\
\Omega_3&=\big\{(\ta,k,\ta',k') \,\big|\, |\ta-\ta'+(\al\cdot (k-k'))^3|
  \ge \max \{ |\ta'+(\al\cdot k')^3)|, |\ta+(\al\cdot k)^3|\} \big\}.
}
Put
\EQ{\label{DE}
\ti{F}_j(u,v)=
  \sum_{k'\in \Zd^N,\,k'\neq k}\int|\ti{u}(\ta-\ta',k-k')||\ti{v}(\ta',k')|\1_{\Omega_j}\,d\ta'
}
for $j=1,2,3$ where
\EQQ{
\1_{\Omega}(\ta,k,\ta',k')=
\begin{cases}
1 \,\,\,\, \text{when}\,\, (\ta,k,\ta',k')\in \Omega,\\
0 \,\,\,\, \text{otherwise}.
\end{cases}
}
Then, we have
\EQQ{
\| uv \|_{Y^{\si,0,-1/2}} \lec \sum_{j=1}^3 \| F_j(u,v) \|_{Y^{\si,0,-1/2}}.
}
By symmetry, we have only to estimate $\| F_j(u,v) \|_{Y^{\si,0,-1/2}}$ for $j=1,2$.

In $\Omega_1$, it follows that $|\ta+(\al\cdot k)^3| \gec |\al\cdot k||\al\cdot(k-k')||\al\cdot k'|$
from Lemma \ref{LE5}.
Therefore, we have
\EQQ{
\| F_1(u,v) \|_{Y^{\si,0,-1/2}}
\lec \Big\|\LR{\ta+(\al\cdot k)^3}^{b-5/8}\sum_{k'}\int 
 \LR{|\al\cdot k||\al\cdot(k-k')||\al\cdot k'|}^{1/8-b}\\
\times |\ti{u}(\ta-\ta',k-k')| |\ti{v}(\ta',k')| \,d\ta' \Big\|_{\ha{G}^{\si,0}L^1_\ta},
}
which is bounded by
\EQQ{
&\Big\| \sum_{k'} \LR{|\al\cdot k||\al\cdot(k-k')||\al\cdot k'|}^{1/8-b}\\
&\hspace*{4em}\times \|\LR{\ta+(\al\cdot (k-k'))^3}^b\ti{u}(\ta,k-k')\|_{L^2_\ta} \|\LR{\ta+(\al\cdot k')^3}^b\ti{v}(\ta,k')\|_{L^2_\ta}  \Big\|_{\ha{G}^{\si,0}}\\
\lec& \|u\|_{X^{\si,0,b}}\|v\|_{X^{\si,0,b}}
}
from Lemma \ref{LE40} and Lemma \ref{LE4}.

In $\Omega_2$, it follows that $|\ta'+(\al\cdot k')^3| \ge |\ta+(\al\cdot k)^3)|$.
Take $\e>0, b'>3/8$ such that $b>b'+\e$.
Then, we have
\EQQ{
\| F_3(u,v) \|_{Y^{\si,0,-1/2}}
  &\le \|\LR{\ta+(\al\cdot k)^3}^{-1/2-\e} (|\ti{u}|*_{\ta,k}\LR{\ta+(\al\cdot k)^3}^\e |\ti{v}|)\|_{\ha{G}^{\si,0}L^1_\ta}\\
  &\lec \||\ti{u}|*_{\ta,k}\LR{\ta+(\al\cdot k)^3}^\e |\ti{v}|\|_{\ha{G}^{\si,0}L^2_\ta}\\
  &\lec \|u\|_{X^{\si,0,b'}}\|v\|_{X^{\si,0,b'+\e}}.
}
Here, we used the Schwartz inequality in the second line and \eqref{BE1} in the final line.
\end{proof}

\begin{proof}[Proof of Proposition \ref{MainEst}]
First, we prove \eqref{e41}.
By symmetry and the decomposition \eqref{DE}, we have only to prove
\EQ{
\| F_j(u,v)(\ta,k) \|_{X^{\si,1/2,-1/2}} \lec T^\e \|u\|_{X^{\si,-1/2,1/2}} \|v\|_{X^{\si,-1/2,1/2}}\label{e44}
}
for $j=1, 2$.
In $\Omega_1$, it follows that $\LR{\ta+(\al\cdot k)^3}^{-1/2}|\al\cdot k|^{1/2} \lec |\al\cdot k'|^{-1/2}|\al\cdot (k-k')|^{-1/2}$.
Therefore, for any $b>3/8$, the left-hand side of \eqref{e44} with $j=1$ is bounded by
\EQQ{
\| (|\al\cdot k|^{-1/2}|\ti{u}|) * (|\al\cdot k|^{-1/2}|\ti{v}|) \|_{X^{\si,0,0}} \lec  \|u\|_{X^{\si,-1/2,b}}\|v\|_{X^{\si,-1/2,b}}
}
where we used \eqref{BE1}.
From Lemma \ref{LE3}, we obtain \eqref{e44} with $j=1$.
In $\Omega_2$, it follows that $|\al\cdot k|^{1/2}\lec \LR{\ta'+(\al\cdot k')^3}^{1/2}|\al\cdot k'|^{-1/2}|\al\cdot (k-k')|^{-1/2}$.
Therefore, for any $b>3/8$, the left-hand side of \eqref{e44} with $j=2$ is bounded by
\EQQ{
&\| (|\al\cdot k|^{-1/2}|\ti{u}|) *_{\ta,k} (|\al\cdot k|^{-1/2}|\ta'+(\al\cdot k')|^{1/2}|\ti{v}|) \|_{X^{\si,0,-1/2}}\\
\lec & \|u\|_{X^{\si,-1/2,b}}\|v\|_{X^{\si,-1/2,1/2}}
}
where we used \eqref{BE2}.
From Lemma \ref{LE3}, we obtain \eqref{e44} with $j=2$.

Next, we prove \eqref{e42}.
By symmetry and the decomposition \eqref{DE}, we have only to prove
\EQ{
\| F_j(u,v)(\ta,k) \|_{Y^{\si,1/2,-1}} \lec T^\e \|u\|_{X^{\si,-1/2,1/2}} \|v\|_{X^{\si,-1/2,1/2}}\label{e45}
}
for $j=1, 2$.
In $\Omega_1$, it follows that $\LR{\ta+(\al\cdot k)^3}^{-1/2}|\al\cdot k|^{1/2}\lec |\al\cdot k'|^{-1/2}|\al\cdot (k-k')|^{-1/2}$.
Therefore, for any $b>3/8$, the left-hand side of \eqref{e45} with $j=1$ is bounded by
\EQQ{
\| (|\al\cdot k|^{-1/2}|\ti{u}|) *_{\ta,k} (|\al\cdot k|^{-1/2}|\ti{v}|) \|_{Y^{\si,0,-1/2}}
\lec  \|u\|_{X^{\si,-1/2,b}}\|v\|_{X^{\si,-1/2,b}}
}
where we used \eqref{BE3}.
From Lemma \ref{LE3}, we obtain \eqref{e45} with $j=1$.
In $\Omega_2$, it follows that $|\al\cdot k|^{1/2}\lec \LR{\ta'+(\al\cdot k')^3}^{1/2}|\al\cdot k'|^{-1/2}|\al\cdot (k-k')|^{-1/2}$.
Therefore, for any $1/2>b>3/8$ the left-hand side of \eqref{e45} with $j=2$ is bounded by
\EQQ{
&\| (|\al\cdot k|^{-1/2}|\ti{u}|) *_{\ta,k} (|\al\cdot k|^{-1/2}|\ta+(\al\cdot k)^3|^{1/2}|\ti{v}|) \|_{Y^{\si,0,-1}}\\
\lec &\| (|\al\cdot k|^{-1/2}|\ti{u}|) *_{\ta,k} (|\al\cdot k|^{-1/2}|\ta+(\al\cdot k)^3|^{1/2}|\ti{v}|) \|_{X^{\si,0,-b}}\\
\lec & \|u\|_{X^{\si,-1/2,b}}\|v\|_{X^{\si,-1/2,1/2}}
}
where we used the Schwarz inequality with respect to $\ta$ in the second line and \eqref{BE2} in the final line.
From Lemma \ref{LE3}, we obtain \eqref{e45} with $j=2$.

\end{proof}


\section{well-posedness results}
In this section, we give outlines of the proofs of the local well-posedness result below and Corollary \ref{cor}.
\begin{prop}\label{mainprop}
Assume {\upshape (A)}.

(Existence) Let $0<\theta<1/8$.
For any $r>0$, there exists $T\sim \min \{ r^{-1/\theta},1 \}$ which satisfies the following:
for any  $f\in B_r(G^{\si,-1/2})$, there exists $u\in C([-T,T]; G^{\si,-1/2})\cap Z^{\si,-1/2}$
satisfying \eqref{KDVI} on $t\in[-T,T]$.
Moreover, the flow map, $f\in B_r(G^{\si,-1/2}) \mapsto u\in C([-T,T]; G^{\si,-1/2})\cap Z^{\si,-1/2}$
is Lipschitz continuous.

(Uniqueness) If $ u,v \in C([-T,T]; G^{\si,-1/2})\cap Z^{\si,-1/2}$ satisfy \eqref{KDVI} on $t\in[-T,T]$, then $u(t)=v(t)$ on $t\in [-T,T]$.
\end{prop}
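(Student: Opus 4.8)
The plan is to produce the solution as the fixed point of a time-truncated version of \eqref{KDVI} in a ball of $Z^{\si,-1/2}$ by the contraction mapping principle, all the analysis being already isolated in Lemmas \ref{LE1}--\ref{LE2} (linear and Duhamel estimates in $Z^{\si,-1/2}$) and Proposition \ref{MainEst} (bilinear estimates for $(uv)_x$ with a gain $T^\e$, $\e>1/8$). Since $\theta<1/8<\e$, the factor $T^\e$ will absorb a power of the data size, which is how the lifespan $T\sim\min\{r^{-1/\theta},1\}$ arises.

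Fix $0<T\le1$ and, for $f\in B_r(G^{\si,-1/2})$, define
\[
\Phi_f(u)(t)=\chi(t)e^{-t\p_x^3}f+\chi(t)\int_0^t e^{-(t-t')\p_x^3}\p_x\big((\chi_T u)^2(t')\big)\,dt' .
\]
A fixed point of $\Phi_f$ solves \eqref{KDVI} on $[-T,T]$, since $\chi\equiv\chi_T\equiv1$ there. As $\chi_T u$ is supported in $[-2T,2T]\cross\R$, applying Lemma \ref{LE1} to the first term, Lemma \ref{LE2} to the second, and then \eqref{e41}--\eqref{e42} with $T$ replaced by $2T$ gives
\[
\|\Phi_f(u)\|_{Z^{\si,-1/2}}\lec\|f\|_{G^{\si,-1/2}}+T^\e\|\chi_T u\|_{X^{\si,-1/2,1/2}}^2 ,
\]
and, using $(\chi_T u)^2-(\chi_T v)^2=\chi_T(u-v)\,\chi_T(u+v)$ together with bilinearity,
\[
\|\Phi_f(u)-\Phi_g(v)\|_{Z^{\si,-1/2}}\lec\|f-g\|_{G^{\si,-1/2}}+T^\e\big(\|\chi_T u\|_{X^{\si,-1/2,1/2}}+\|\chi_T v\|_{X^{\si,-1/2,1/2}}\big)\|\chi_T(u-v)\|_{X^{\si,-1/2,1/2}} .
\]
The step that needs attention is the bound $\|\chi_T w\|_{X^{\si,-1/2,1/2}}\lec\|w\|_{Z^{\si,-1/2}}$ with constant independent of $0<T\le1$: after translating out the modulation $(\al\cdot k)^3$ at each fixed spatial frequency $k$, this reduces to the elementary inequality $\|\chi_T g\|_{H^{1/2}_t}\lec\|g\|_{H^{1/2}_t}+\|\mathcal{F}_t g\|_{L^1_\ta}$ with $T$-independent constant, obtained by writing $\langle\ta\rangle^{1/2}\lec\langle\ta-\ta'\rangle^{1/2}+\langle\ta'\rangle^{1/2}$ inside $\widehat{\chi_T}*\mathcal{F}_t g$ and using $\|\widehat{\chi_T}\|_{L^1}=\|\widehat\chi\|_{L^1}$ and $\|\langle\ta\rangle^{1/2}\widehat{\chi_T}\|_{L^2}\lec1$; after taking the $\widehat G^{\si,-1/2}$-weighted $\ell^2_k$ norm, the two contributions are exactly $\|w\|_{X^{\si,-1/2,1/2}}$ and $\|w\|_{Y^{\si,-1/2,0}}$, which is precisely why the $Y$-component was built into the norm of $Z^{\si,-1/2}$.

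With these two estimates I would choose $R\sim r$ and then $T=c\min\{r^{-1/\theta},1\}$ for a sufficiently small absolute constant $c$, so that $\Phi_f$ maps $B_R(Z^{\si,-1/2})$ into itself and contracts there; for $r\ge1$ one uses $T^\e R\sim r^{1-\e/\theta}\lec1$ (here $\e>\theta$), and for $r\le1$ one uses $R\lec1$. The resulting fixed point $u$ lies in $C([-T,T];G^{\si,-1/2})$ by the embedding $Z^{\si,-1/2}\hookrightarrow C_t(\R:G^{\si,-1/2})$, and solves \eqref{KDVI} on $[-T,T]$. Applying the difference estimate both to $\Phi_f(u)-\Phi_f(v)$ (for the contraction) and to the two fixed points for data $f$ and $g$ yields the Lipschitz continuity of the flow map.

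For the uniqueness assertion I would argue by continuation. If $u,v\in C([-T,T];G^{\si,-1/2})\cap Z^{\si,-1/2}$ both solve \eqref{KDVI}, then running the contraction above on a short interval $[-\de,\de]$ inside a ball large enough to contain both solutions (with $\de$ depending only on $\|u\|_{Z^{\si,-1/2}}+\|v\|_{Z^{\si,-1/2}}$) forces $u\equiv v$ on $[-\de,\de]$; the set $\{t_0\in[0,T]:u\equiv v\text{ on }[0,t_0]\}$ is then nonempty, closed (since $Z^{\si,-1/2}\hookrightarrow C_t(\R:G^{\si,-1/2})$), and open in $[0,T]$ (apply local uniqueness to the time-translates of $u$ and $v$ with common value $u(t_0)=v(t_0)$, using translation invariance of \eqref{KDVI} and of the $Z$-norm), hence equals $[0,T]$; negative times are symmetric. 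I do not expect a genuine conceptual obstacle: the entire analytic content is Proposition \ref{MainEst} and the rest is the standard Bourgain scheme. The one point that is not fully automatic is the uniform-in-$T$ boundedness of the cutoff $\chi_T$ at the endpoint regularity $b=1/2$ (handled by the auxiliary $Y$-norm); a secondary bookkeeping nuisance is controlling the $Z$-norms of restrictions and retranslates of solutions in the uniqueness step.
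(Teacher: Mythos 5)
Your proposal follows essentially the same route as the paper: a contraction argument for the time-truncated Duhamel map in a ball of $Z^{\si,-1/2}$, combining Lemmas \ref{LE1}--\ref{LE2} with Proposition \ref{MainEst} and using $T^\e$, $\e>1/8>\theta$, to obtain the lifespan $T\sim\min\{r^{-1/\theta},1\}$. The additional details you supply (the $T$-uniform bound for multiplication by $\chi_T$ at $b=1/2$ via the $Y$-component, the Lipschitz dependence, and uniqueness by continuation) are precisely the ``standard'' steps the paper delegates to \cite{Gi2}, \cite{Ke}, and they are correct.
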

We can prove this proposition by the standard method of the Fourier restriction norm method (see e.g. \cite{Gi2}, \cite{Ke}).
Therefore, we describe only the proof of the existence.
\begin{proof}
Put
\EQQ{
M(u)=\chi(t) e^{-t\p_x^3} f+ \chi(t)\int_0^t e^{-(t-t')\p_x^3} \p_x(\chi_T(t')u(t'))^2 \, dt'.
}
We shall show $M$ is a contraction on $B_{C_0}(Z^{\si,-1/2})$ where $C_0>0$ is to be determined later.
Then, we have a solution of $u=M(u)$ by the fixed point argument, which satisfy \eqref{KDVI} on $t\in [-T,T]$.
From Lemma \ref{LE1}, we have
\EQQ{
\|\chi(t)e^{-t\p_x^3} f\|_{Z^{\si,-1/2}}\lec \|f\|_{G^{\si,-1/2}}.
}
From Lemma \ref{LE2} and Proposition \ref{MainEst}, we have
\EQQ{
\| \chi(t) \int_0^t e^{-(t-t')\p_x^3} \p_x(\chi_T(t')u(t'))^2 \, dt' \|_{Z^{\si,-1/2}}
  \lec T^\theta\|\chi_T u\|_{Z^{\si,-1/2}}^2 \lec  T^\theta\|u\|_{Z^{\si,-1/2}}^2.
}
Combining them, we obtain
\EQQ{
\| Mu\|_{Z^{\si,-1/2}} &\le C\|f\|_{G^{\si,-1/2}}+CT^\theta\|u\|_{Z^{\si,-1/2}}^2\\
  &\le 2Cr
}
if $u\in B_{C_0}(Z^{\si,-1/2})$, $C_0=2Cr$ and $T\le (4C^2r)^{-1/\theta}$.
We conclude that $M$ is a map from $B_{C_0}(Z^{\si,-1/2})$ to itself.
In the same manner, we have
\EQQ{
\| M(u-v)\|_{Z^{\si,-1/2}} & \le CT^\theta\|u-v\|_{Z^{\si,-1/2}}\|u+v\|_{Z^{\si,-1/2}}\\
  &\le 1/2\|u-v\|_{Z^{\si,-1/2}}
}
for $u, v \in B_{C_0}(Z^{\si,-1/2})$.
\end{proof}

Here, we introduce the localized norm of $Z^{\si,a}$;
\EQQ{
\|u\|_{Z^{\si,a}_{[t_1,t_2]}} =\{\inf \|v\|_{Z^{\si,a}} \,|\, v=u \,\,\text{on} \,\, [t_1,t_2] \}.
}
\begin{proof}[Outline of the proof of Corollary \ref{cor}]
In Theorem 2 in \cite{Co}, a priori estimate and global well-posedness for \eqref{KDVI} with real valued initial data in $\Hd^{\si_1-1/2}(\R\setminus 2\pi\al_j^{-1}\Z)$ have been proved.
Therefore, there exist $C_*=C_*(T,r)$, $\delta=\delta(T,r)$ and $1/8>\theta>0$ satisfying
\EQQS{
&\sup_{t\in[-T,T]} \|v(t)\|_{G^{\si,-1/2}} = \sup_{t\in[-T,T]} \|v(t)\|_{H^{\si_1-1/2}} \le C_*,\\
&0<\delta^\theta<<C_*^{-1},\qquad  \sup_{-k+1\le j\le k} \|v\|_{Z_{[(j-1)\delta, j\delta]}^{\si,-1/2}} < C_*, \qquad k:=T/\delta\in\N
}
for the solution $v$ of \eqref{KDVI} with initial data $g$.
Since we have the local well-posedness in $G^{\si,-1/2}$ and the existence time of solutions depends only on the size of the norm of initial data,
we only need to prove the following a priori estimate: for sufficiently small $\e>0$, the solution $u$ of \eqref{KDVI} satisfies
\EQQ{
\sup_{t\in{[-T,T]}}\|u(t)\|_{G^{\si,-1/2}} \le 2C_*.
}
For that pourpose, put $w=u-v$ and we will show
\EQ{\label{apri}
\sup_{t\in{[-T,T]}}\|w(t)\|_{G^{\si,-1/2}} \le C\e < C_*
}
where $\e>$ is to be determined later.
For simpleness we consider only $0\le t \le T$.
$w$ satisfies the following integral equation:
\EQQ{
w(t)=e^{-t\p_x^3} w((j-1)\delta)+ \int_{(j-1)\delta}^{t} e^{-(t-t')\p_x^3} \p_x(w(t')(w(t')+2v(t'))) \, dt'.
}
Therefore, if we have 
\EQQ{
\|w((j-1)\delta)\|_{G^{\si,-1/2}} \le C_{j-1}\e <C_*,
}
then we obtain
\EQQ{
\sup_{(j-1)\delta\le t \le j\delta}\|w(t)\| \le C \|w\|_{Z^{\si,-1/2}_{[(j-1)\delta, j\delta]}}\le C \|w((j-1)\delta)\|_{G^{\si,-1/2}}
   < C_j\e
}
in the same manner as the proof of Proposition \ref{mainprop} because $\delta^\theta\|w+2v\|_{Z^{\si,-1/2}_{[(j-1)\delta, j\delta]}}<<1$.
We can iterate this argument until $j=k$ if we take $\e$ sufficiently small such that $\sup _{1\le j \le k }C_{j}\e <<C_*$.
\end{proof}


\section{ill-posedness results}
 
Let $\mu\in\R, \rho\in\R$ and $K>0$.
We say that $\mu$ is a number of type $(K,\rho)$ if for any integers $p$ and $q\neq 0$,
\EQQ{
\Big|\mu-\frac{p}{q}\Big| \ge \frac{K}{|q|^{2+\rho}}.
}
Note that almost every $\mu \in\R$ is a number of type $(K,\rho)$ for some $K>0$ when $\rho>0$ by Lemma 3 on p. 114 of \cite{Ar}.
By Dirichlet's Theorem (see, e.g. Theorem 1A on p. 34 of \cite{Sc}), if $\mu\in \R$ is a number of type $(K,\rho)$
for some $K>0$, then $\rho \ge 0$.
We define $\rho_\mu$ by 
\EQQ{
\rho_\mu = \inf\,\{\rho \ge 0 \,|\, \mu \, \text{is a number of type }\, (K,\rho) \,\text{for some}\, K>0 \}
}
as in \cite{Oh1}.

For simplicity, we consider only the case $N=2$.
\begin{prop}\label{ill1}
Assume that $\mu=\al_1/\al_2$, $a > (2\min\{ \si_1, \si_2\}-\min\{1, \rho_\mu\})/(1+\rho_\mu)$ and $\min\{\si_1,\si_2\}+a \ge -2$.
Let $r>0, 1>T>0$.
Then, the flow map of \eqref{KDVI} from initial data $f \in B_r(G^{\si,a}) $ to a solution $u \in C([-T,T]:G^{\si,a})$  (if it exists)
cannot be $C^2$ differentiable at the origin.
\end{prop}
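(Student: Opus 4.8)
The plan is to argue by contradiction via the scheme Bourgain used in \cite{Bo2}. Suppose the flow map $f\mapsto u\in C([-T,T]:G^{\si,a})$ is $C^2$ at the origin. Since the zero datum yields the zero solution and the first derivative at $0$ is the (bounded) linear propagator $e^{-t\p_x^3}$, differentiating \eqref{KDVI} twice at $f=0$ shows that the second Fréchet derivative at $0$ is a bounded symmetric bilinear map equal to twice the second Picard iterate
\EQQ{
u_2(t)=\int_0^t e^{-(t-t')\p_x^3}\,\p_x\big((e^{-t'\p_x^3}f)^2\big)\,dt'.
}
Hence it suffices to exhibit $f_n$ with $\|f_n\|_{G^{\si,a}}\sim1$ and $\sup_{|t|\le T}\|u_2(t)\|_{G^{\si,a}}\to\I$; rescaling all $f_n$ by a fixed small constant then places them in $B_r(G^{\si,a})$ without changing the ratio $\sup_t\|u_2(t)\|_{G^{\si,a}}/\|f_n\|_{G^{\si,a}}^2$. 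Writing $f=\sum_k\ha f(k)e^{i\al\cdot k x}$ and using $a^3-(a-b)^3-b^3=3ab(a-b)$ (as in Lemma \ref{LE5}), a direct computation gives
\EQQ{
\F_x u_2(t)(\al\cdot k)=-\tfrac13\,e^{it(\al\cdot k)^3}\sum_{k_1+k_2=k}\frac{e^{it\Phi(k_1,k_2)}-1}{(\al\cdot k_1)(\al\cdot k_2)}\,\ha f(k_1)\ha f(k_2),\qquad \Phi(k_1,k_2)=-3(\al\cdot k_1)(\al\cdot k_2)(\al\cdot k),
}
where the $i(\al\cdot k)$ produced by $\p_x$ has cancelled the factor $\al\cdot k$ inside $\Phi$ coming from the time integral; the denominators never vanish because $\al\cdot j\ne0$ for $j\in\Zd^2$.

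The crux is to feed in data supported on two frequencies for which this interaction is both resonant and cheap in $G^{\si,a}$. Assume WLOG $\si_1=\min\{\si_1,\si_2\}$ (the other case is symmetric under $\al_1\leftrightarrow\al_2$, $\si_1\leftrightarrow\si_2$, and $\rho_\mu=\rho_{1/\mu}$). Fix $\rho<\min\{1,\rho_\mu\}$ arbitrarily close to it; by the definition of $\rho_\mu$ (choosing continued-fraction convergents of $\mu$ with the appropriate partial quotient, and using that $\mu$ is of type $(K,\rho')$ for $\rho'$ slightly above $\rho_\mu$ for the lower bound) there is a sequence $p_n/q_n\to\mu$ with $q_n\to\I$, $p_n\ne0$, and $|\mu q_n-p_n|\asymp q_n^{-1-\rho}$ (up to $q_n^{o(1)}$, which is harmless since the target inequality in $a$ is strict). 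Put $A_n=(q_n,-p_n)$, $B_n=(-q_n,0)$, so $k_n:=A_n+B_n=(0,-p_n)$; then $|\al\cdot A_n|\asymp q_n^{-1-\rho}$, $|\al\cdot B_n|\asymp q_n$, $|\al\cdot k_n|\asymp q_n$, and $|\Phi(A_n,B_n)|\asymp q_n^{-1-\rho}\cdot q_n^2=q_n^{1-\rho}$. Let $\ha f_n$ be supported on $\{A_n,B_n\}$, normalized so the two contributions to $\|f_n\|_{G^{\si,a}}^2$ are equal; since $A_n$ has both components of size $q_n$ and $B_n$ has one component of size $q_n$ (in the cheap slot $\si_1$), this forces $\ha f_n(A_n)\asymp|\al\cdot A_n|^{-a}q_n^{-\si_1-\si_2}$ and $\ha f_n(B_n)\asymp q_n^{-a-\si_1}$, while the weight at the output frequency is $|\al\cdot k_n|^a\LR{0}^{\si_1}\LR{p_n}^{\si_2}\asymp q_n^{a+\si_2}$.

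Because $k_n$, $2A_n$, $2B_n$ are pairwise distinct in $\Zd^2$ for large $n$, the sum defining $\F_x u_2(t)(\al\cdot k_n)$ reduces to the single (uncancelled) cross term, and collecting powers gives
\EQQ{
\|u_2(t)\|_{G^{\si,a}}\gec q_n^{a+\si_2}\cdot\frac{\ha f_n(A_n)\,\ha f_n(B_n)}{|\al\cdot A_n|\,|\al\cdot B_n|}\,\big|e^{it\Phi(A_n,B_n)}-1\big|\asymp q_n^{(1+\rho)a+\rho-2\si_1}\,\big|e^{it\Phi(A_n,B_n)}-1\big|.
}
If $\rho\le1$ then $|\Phi(A_n,B_n)|\asymp q_n^{1-\rho}\gec1$, so for a suitable $t_n\in[-T,T]$ one has $|e^{it_n\Phi(A_n,B_n)}-1|\gec1$, and the right side blows up exactly when $(1+\rho)a+\rho-2\si_1>0$, i.e. $a>(2\si_1-\rho)/(1+\rho)$; letting $\rho\uparrow\min\{1,\rho_\mu\}$ gives the asserted range when $\rho_\mu\le1$. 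When $\rho_\mu>1$ one instead takes $1<\rho<\rho_\mu$, so $|\Phi(A_n,B_n)|\asymp q_n^{1-\rho}\to0$ and $\sup_{|t|\le T}|e^{it\Phi(A_n,B_n)}-1|\asymp q_n^{1-\rho}$; this extra factor improves the exponent to $(1+\rho)a+1-2\si_1$, giving $a>(2\si_1-1)/(1+\rho)$, and $\rho\uparrow\rho_\mu$ yields the claim. In both cases the threshold is $(2\min\{\si_1,\si_2\}-\min\{1,\rho_\mu\})/(1+\rho_\mu)$. (The hypothesis $\min\{\si_1,\si_2\}+a\ge-2$ is what guarantees $G^{\si,a}\hookrightarrow\Sp$, so that \eqref{KDVI} and its flow map are meaningful; it plays no further role.)

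The routine pieces are the second-order expansion of \eqref{KDVI} and the weight bookkeeping. The points I expect to be delicate are: (i) producing from $\rho_\mu$ a sequence $q_n$ with a genuinely \emph{two-sided} estimate $|\mu q_n-p_n|\asymp q_n^{-1-\rho}$ rather than only an upper bound — the lower bound, needed to keep $|\Phi(A_n,B_n)|\asymp q_n^{1-\rho}$ from degenerating, comes from $\mu$ being of type $(K,\rho')$ for $\rho'$ just above $\rho_\mu$; and (ii) the dichotomy at $\rho_\mu=1$, governed by whether the resonance $|\Phi(A_n,B_n)|\asymp q_n^{1-\rho}$ grows or decays in $n$ — this is precisely the mechanism producing the $\min\{1,\rho_\mu\}$ in the numerator of the threshold, and it is where the interplay between the Fourier restriction structure and the Diophantine problem enters.
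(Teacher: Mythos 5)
Your proposal follows the paper's proof essentially verbatim in strategy: Bourgain's $C^2$ scheme applied to the second Picard iterate, data concentrated on the two frequencies $(q_n,-p_n)$ and $(-q_n,0)$ with exactly the paper's normalization, divergence carried by the cross term at the output frequency $(0,-p_n)$, and the same exponent bookkeeping yielding the threshold $(2\min\{\si_1,\si_2\}-\min\{1,\rho_\mu\})/(1+\rho_\mu)$. Your observation that the three output frequencies $(0,-p_n)$, $(-2q_n,0)$, $(2q_n,-2p_n)$ are distinct, so the cross term alone bounds the $\ell^2$-type norm from below, legitimately lets you skip the paper's estimates of the two self-interaction terms $I_2,I_3$; note that this is precisely where the paper spends the hypothesis $\min\{\si_1,\si_2\}+a\ge -2$ (to get $\|I_2\|_{G^{\si,a}}\lec |q_n|^{-a-\si_1-2}\lec 1$), so your side remark about that hypothesis guaranteeing an embedding into $\Sp$ is not how it enters the paper, though this does not affect your argument.

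The one step whose justification would fail as written is the Diophantine input. You fix $\rho<\min\{1,\rho_\mu\}$ (or $1<\rho<\rho_\mu$) and claim, ``by the definition of $\rho_\mu$'', via convergents ``with the appropriate partial quotient'' and the type-$(K,\rho')$ condition, a sequence with the two-sided bound $|\mu q_n-p_n|\asymp q_n^{-1-\rho}$. The definition of $\rho_\mu$ only produces approximations of quality exponent close to $\rho_\mu$ itself: the infimum gives $|\mu q_n-p_n|\lec q_n^{-1-\rho_\mu+\e}$, and the type-$(K,\rho_\mu+\e)$ condition gives the matching lower bound $\gec q_n^{-1-\rho_\mu-\e}$; that lower bound cannot certify $|\mu q_n-p_n|\gec q_n^{-1-\rho}$ at a strictly smaller exponent $\rho<\rho_\mu$, and the convergents of a given $\mu$ realize only whatever exponents actually occur, not a prescribed intermediate one (your prescription is even vacuous when $\rho_\mu=0$). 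The statement you want is in fact true (e.g.\ replace a near-best approximation $(p_k,q_k)$ of exponent $\rho_k\approx\rho_\mu$ by the multiple $(mp_k,mq_k)$ with $m\sim q_k^{(\rho_k-\rho)/(2+\rho)}$, whose error is known exactly), but it is also unnecessary: as in the paper, run the argument directly on the sequence of exponent $\rho_\mu\pm\e$ and split according to whether the resonance $|\al_1q_n-\al_2p_n|\,|q_n|\,|p_n|\sim q_n^{1-\rho_\mu\pm\e}$ is large or small --- equivalently, keep the paper's minimum of the two exponents $a(\rho_\mu+1\pm\e)-2\si_1+1$ and $(a+1)(\rho_\mu+1\pm\e)-2\si_1-1$ --- and let the strictness of the hypothesis on $a$ absorb the $\pm\e$. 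With that step repaired, your proof coincides with the paper's.
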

\begin{rem}\label{rem10}
When $\si_2=0, \rho_\mu=1$, the assumption on $a$ in Proposition \ref{ill1} is $a>-1/2$.
This is the reason why we take $a=-1/2$ in Theorem \ref{maintheorem}.
\end{rem}

\begin{proof}
By symmetry, we assume $\si_1 \le \si_2$.
We assume that the flow map is $C^2$ and $u(\ga,t,x)$ is the solution of \eqref{KDVI} with initial data $\ga f$
where $\ga>0$.
Then, following the argument in \cite{Tz}  (see also \cite{Bo2}), we have
\EQQ{
\frac{\p u}{\p\ga}(0,t,x)&=e^{ -t\p_x^3} f(x):=u_1(t,x),\\
\frac{\p^2 u}{\p\ga^2}(0,t,x)&=\int_0^t e^{ -(t-t')\p_x^3} \p_x (u_1(t',x))^2 \,dt':=u_2(t,x).
}
We only need to show that 
\EQQ{
\| u_2(t,x) \|_{G^{\si,a}} \lec \|f\|^2_{G^{\si,a}}
}
fails.
From the definition of $\rho_\mu$, for any $\e>0$, we have the following:\\
(i) There exists $K>0$ such that
\EQ{
\Big|\frac{\al_1}{\al_2} -\frac{p}{q}\Big| \ge \frac{K}{|q|^{2+\rho_\mu+\e}}
}
holds for any integers $p$ and $q\neq 0$.\\
(ii) For any $n\in\N$,
there exists $(p_n,q_n) \in \Z^2$ such that
\EQ{
\Big|\frac{\al_1}{\al_2} -\frac{p_n}{q_n}\Big| \le \frac{n^{-1}}{|q_n|^{2+\rho_\mu-\e}}.\label{e30}
}
Note that, from \eqref{e30}, it follows that $p_n \sim q_n \to \infty$ as $n\to \infty$ if $\e$ is sufficiently small.
Put
\EQQ{
\ha{f}_n(k)=
|q_n|^{-a-\si_1}\1_{(-q_n,0)}(k)+|\al_1q_n-\al_2p_n|^{-a}|q_n|^{-\si_1}|p_n|^{-\si_2}\1_{(q_n,-p_n)}(k)
}
where $\1_{(a,b)}(k)=1$ for $k=(a,b)$ and $\1_{(a,b)}(k)=0$ otherwise.

Then,
\EQQ{
\|f_n\|^2_{G^{\si,a}}&\sim 1.
}
We shall show that $\|u_2(t,x)\|_{G^{\si,a}}\to \infty$ as $n \to \infty$.
Put $\vp_\al(k,k')=3(\al\cdot k)(\al\cdot k')(\al\cdot (k-k'))$.
Then,
\EQQ{
\ha{e^{ t\p_x^3} u_2}(t,k)&=i\int_0^t\sum_{k'\in\Zd^N} e^{-it'\vp_\al(k,k')} (\al\cdot k)\ha{f_n}(k-k')\ha{f_n}(k') \,dt'\\
&= I_1 +I_2+ I_3
}
where
\EQQ{
I_1&= -2i\int_0^t e^{-it'\vp_\al((0,-p_n),(q_n,-p_n))} \,dt'\\
    &\hspace{8em}\cross(\al_2p_n)|\al_1q_n-\al_2p_n|^{-a}|q_n|^{-a-2\si_1}|p_n|^{-\si_2}\1_{(0,-p_n)}(k),\\
I_2&= -2i\int_0^t e^{-it'\vp_\al((-2q_n,0),(-q_n,0))} \,dt'
    (\al_1q_n)|q_n|^{-2a-2\si_1}\1_{(-2q_n,0)}(k),\\
I_3&= 2i\int_0^t e^{-it'\vp_\al((2q_n,-2p_n),(q_n,-p_n))} \,dt'\\
   &\hspace*{8em}\cross  (\al_1q_n-\al_2p_n)|\al_1q_n-\al_2p_n|^{-2a}|q_n|^{-2\si_1}|p_n|^{-2\si_2}\1_{(2q_n,-2p_n)}(k).
}
We shall use the following inequality to estimate the integral with respect to $t'$
\EQQ{
\frac{1}{\LR{y}}\gec \sup_{t\in[-T,T]}\Big| \int_0^t e^{-it'y} \, dt' \Big| \gec \frac{T}{\LR{y}}.
}
First, we estimate $\sup_{t\in[-T,T]} \|I_1\|_{G^{\si,a}}$.
Since $|q_n|^{-\rho_\mu-1+\e}\gec |\al_1q_n-\al_2p_n|\gec |q_n|^{-\rho_\mu-1-\e}$,
we have
\EQQ{
\sup_{t\in[-T,T]} \|I_1\|_{G^{\si,a}} & \gec T \frac{|\al_1q_n-\al_2p_n|^{-a}|q_n|^{-a-2\si_1}|p_n|^{a+1}}{\LR{|\al_1q_n-\al_2p_n||q_n||p_n|}}\\
  & \gec T\min\{ |q_n|^{a(\rho_\mu+1 \pm \e)-2\si_1+1}, |q_n|^{(a+1)(\rho_\mu+1 \pm \e)-2\si_1-1}\},
}
which goes to $\infty$ for sufficiently small $\e>0$.

Obviously, we have
\EQQ{
  \sup_{t\in[-T,T]} \|I_2\|_{G^{\si,a}} \lec \frac{|q_n|^{-a-\si_1+1}}{\LR{|q_n|^3}}\sim |q_n|^{-a-\si_1-2}\lec 1.
}
 
Finally, we estimate $\sup_{t\in[-T,T]} \|I_3\|_{G^{\si,a}}$.
\EQQ{
\sup_{t\in[-T,T]} \|I_3\|_{G^{\si,a}} &\lec \frac{|\al_1q_n-\al_2p_n|^{-a+1}|q_n|^{-\si_1}|p_n|^{-\si_2}}{\LR{|\al_1q_n-\al_2p_n|^3}}\\
  & << \sup_{t\in[-T,T]} \|I_1\|_{G^{\si,a}}.
}
Therefore, we conclude $\|u_2(t,x)\|_{G^{\si,a}}\to \infty$.
\end{proof}

Finally we prove the following lemma to show that the condition $s>(N-1)/2$ in (A) is necessary to assure that the series in \eqref{Gdef} converges in $\Sp$.
\begin{lem}\label{ill2}
Let $a \in \R$, $\si_j\ge 0$ for all $0\le j \le N$ and $\sum_{j=1}^N \si_j  \le (N-1)/2 $.
Then there exists $\{f_n\} \subset G^{\si,a}$ which satisfies the following:\\
(i) $\sup_{n}\|f_n\|_{G^{\si,a}} \lec 1$\\
(ii) for some $\vp \in \D$, $\LR{f_n, \mathcal{F}^{-1}_\xi[\vp]}=\LR{\mathcal{F}_x[f_n], \vp} \to \infty$ as $n\to\infty$ 
\end{lem}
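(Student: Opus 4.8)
The plan is to exploit how crowded the frequency set $\{\al\cdot k:k\in\Zd^N\}$ becomes once $N\ge2$: inside any fixed bounded window $I\subset(0,\I)$ there are, at every dyadic scale $2^m$, at least on the order of $2^{m(N-1)}$ such frequencies carried by lattice points $k$ with $\|k\|_\infty\lec2^m$, while the $G^{\si,a}$--weight $\prod_{j}\LR{k_j}^{\si_j}$ of any such $k$ is only $\lec2^{m\sum_j\si_j}\le2^{m(N-1)/2}$. Consequently a unit-norm element of $G^{\si,a}$ supported on such frequencies can be paired with a fixed test function supported in $I$ to produce an arbitrarily large number, which is precisely what (i)--(ii) demand. (When $N=1$ the hypotheses force $\si_1=0$, every $f\in G^{\si,a}$ then has polynomially bounded Fourier coefficients, and the series converges in $\Sp$ anyway, so the statement is meant for $N\ge2$, which we assume.) First I would fix a compact interval $I=[c_1,c_2]\subset(0,\I)$ with $c_2-c_1\ge\max_j|\al_j|$ and a function $\vp\in\D$ with $\vp\equiv1$ on $I$; note that the homogeneous weight $|\al\cdot k|^{a}$ is then comparable to a positive constant for every $k$ with $\al\cdot k\in I$, whatever the sign of $a$.

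The combinatorial core is the claim
\EQQ{
\sum_{k\in\Zd^N,\ \al\cdot k\in I}\ \prod_{j=1}^N\LR{k_j}^{-2\si_j}=\I.
}
To prove it, for each $m\ge1$ I would let $(k_2,\dots,k_N)$ range over the corner box $Q_m=\{(k_2,\dots,k_N)\in\Z^{N-1}:2^m<k_j\le2^{m+1}\ (2\le j\le N)\}$; the $Q_m$ are pairwise disjoint and $\#Q_m=2^{m(N-1)}$. Because $I$ has length at least $|\al_1|$, for each such $(k_2,\dots,k_N)$ there is at least one $k_1\in\Z$ with $\al\cdot k\in I$; this $k_1$ satisfies $|k_1|\lec2^m$ (with implicit constant depending only on $\al$), and $k\neq0$ since $\al\cdot k\in I$ while $0\notin I$. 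Hence $\LR{k_j}\lec2^m$ for every $j$, so each of these $2^{m(N-1)}$ points contributes $\gec2^{-2m\sum_j\si_j}\ge2^{-m(N-1)}$ to the sum, whence the scale-$m$ contribution is $\gec1$ and the series diverges. Truncating this construction to $1\le m\le n$ produces a finite set $T_n\subset\{k\in\Zd^N:\al\cdot k\in I\}$ with $\sum_{k\in T_n}\prod_j\LR{k_j}^{-2\si_j}\gec n$.

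With $T_n$ in hand I would set $\ha{f}_n(k)=Z_n^{-1}\1_{T_n}(k)\,|\al\cdot k|^{-2a}\prod_j\LR{k_j}^{-2\si_j}$, where $Z_n>0$ is chosen so that $\|f_n\|_{G^{\si,a}}=1$. Since $T_n$ is finite, $f_n$ is a trigonometric polynomial and all the sums below are finite; moreover the normalization gives $Z_n^2=\sum_{k\in T_n}|\al\cdot k|^{-2a}\prod_j\LR{k_j}^{-2\si_j}$, which by the previous paragraph and $|\al\cdot k|^{-2a}\gec1$ on $I$ satisfies $Z_n^2\gec n$. On the other hand, with the convention $\mathcal{F}_x[f_n](\x)=\sum_k\ha{f}_n(k)\,\delta(\x-\al\cdot k)$ and using $\vp(\al\cdot k)=1$ for $k\in T_n$,
\EQQ{
\LR{\mathcal{F}_x[f_n],\vp}=\sum_{k\in T_n}\ha{f}_n(k)=Z_n^{-1}\sum_{k\in T_n}|\al\cdot k|^{-2a}\prod_j\LR{k_j}^{-2\si_j}=Z_n.
}
Therefore $\sup_n\|f_n\|_{G^{\si,a}}=1\lec1$ while $\LR{\mathcal{F}_x[f_n],\vp}=Z_n\gec n^{1/2}\to\I$, which is exactly (i) and (ii).

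The only genuinely non-routine point is the combinatorial claim --- producing, in a fixed bounded window and at every scale, enough frequencies $\al\cdot k$ carried by lattice points of controlled size. The corner-box device above achieves this with no Diophantine input on $\al$ (it uses only $\al\cdot k\neq0$ on $\Zd^N$), so the rest is bookkeeping; still, one should verify the elementary points invoked along the way: that $\vp\in\D$ may be taken identically $1$ on the fixed interval $I$, that $|\al\cdot k|^{-2a}$ lies between two positive constants on $I$ for either sign of $a$, and that the $T_n$ are nested finite subsets of $\{k:\al\cdot k\in I\}$ on which $Z_n$ is finite and strictly positive.
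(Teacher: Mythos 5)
Your proof is correct, and it reaches the conclusion by a construction different from the paper's. Both arguments rest on the same mechanism: restrict to lattice points $k$ with $\al\cdot k$ in a fixed compact window away from $0$ (so that the homogeneous factor $|\al\cdot k|^{a}$ is comparable to a constant), and exploit that this window contains $\sim R^{N-1}$ frequencies $\al\cdot k$ carried by points with $|k|\lec R$, while the hypothesis $\sum_j\si_j\le (N-1)/2$ caps the weight at $\prod_j\LR{k_j}^{\si_j}\lec \LR{k}^{(N-1)/2}$. You then take the Cauchy--Schwarz extremizer: $\ha f_n\propto |\al\cdot k|^{-2a}\prod_j\LR{k_j}^{-2\si_j}$ on a finite truncation $T_n$, normalized to unit norm, so that everything reduces to the divergence of $\sum_{\al\cdot k\in I}\prod_j\LR{k_j}^{-2\si_j}$, which you prove by the dyadic corner-box count (and your verification that each box $(k_2,\dots,k_N)$ admits a $k_1$ with $\al\cdot k\in I$, with $|k_1|\lec 2^m$, is sound since $\al_1\neq 0$ and $|I|\ge|\al_1|$). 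The paper instead exhibits one explicit family, $\ha f_n(k)=\LR{k}^{1-N}(\log\LR{k})^{-1}$ on the slab $A_n=\{\LR{x}\le n,\ 2|\al_N|\le|\al\cdot x|\le 4|\al_N|\}$, and uses the integral comparison $\int_{\R^{N-1}}\LR{x}^{1-N}(\log\LR{x})^{-1}dx=\infty$ versus $\int_{\R^{N-1}}\LR{x}^{1-N}(\log\LR{x})^{-2}dx<\infty$ to get divergent pairing but bounded norm simultaneously, with the $\si$-dependence absorbed via $\prod_j\LR{k_j}^{2\si_j}\le\LR{k}^{2\sum_j\si_j}\le\LR{k}^{N-1}$. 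Your route avoids the logarithmic fine-tuning, gives $\|f_n\|_{G^{\si,a}}=1$ exactly and a quantitative divergence rate ($\gec n^{1/2}$ after truncating at scale $2^n$), at the cost of the normalization bookkeeping; the paper's route gives a single closed-form sequence whose coefficients do not depend on $\si$ or $a$. One shared caveat: the divergence genuinely needs $N\ge 2$ (for $N=1$ the hypotheses force $\si_1=0$ and the statement fails, since only finitely many frequencies $\al_1 k$ meet any compact set); you flagged this explicitly, whereas the paper leaves it implicit.
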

\begin{proof}
Put
\EQQ{
A_{n}&=\{x \in \R^N \,|\, \LR{x} \le n \,\, \text{and}\,\, 2|\al_N| \le |\al\cdot x|\le 4|\al_N|\},\\
\ha{f_n}(k)&=
\begin{cases}
\LR{k}^{1-N}(\log \LR{k})^{-1}, \quad \text{when}\quad k\in \Z^N \cap A_{n}\\
0, \qquad \qquad \qquad  \qquad \text{otherwise}.
\end{cases}\\
}
Let $\vp(\x) \in \D$ such that $\vp(\x)=1$ for $2|\al_N|\le \x \le 4|\al_N|$
and $\vp(\x)=0$ for $\x \le |\al_N|$ or $\x \ge 5|\al_N|$.
We have the following relation between the Fourier transform and the Fourier coefficient
\EQQ{
\mathcal{F}_x[f_n](\xi)=\sum_{k\in \Zd^N} \delta(\x-\al\cdot k)\ha{f}_n(k).
}
Therefore, we have
\EQQ{
\LR{\mathcal{F}_x[f_n], \vp}\sim \sum_{k\in \Zd^N\cap A_{n}} \LR{k}^{1-N}(\log \LR{k})^{-1} \to \infty,
}
and
\EQQ{
\|f_n\|^2_{G^{\si,a}} &\sim |\al_N|^{2a}\sum_{k\in \Zd^N\cap A_{n}} \LR{k}^{2(1-N)}(\log \LR{k})^{-2} \prod_{j=1}^{N} \LR{k_j}^{2\si_j}\\
&\lec \sum_{k\in \Zd^N \cap A_{N}} \LR{k}^{1-N}(\log \LR{k})^{-2} \lec 1.
}
Here we used
\EQQ{
\lim_{n\to \infty} \int_{A_{n}} \frac{1}{\LR{x}^{N-1}\log \LR{x}} \, dx \sim C(\al_N) \int_{\R^{N-1}} \frac{1}{\LR{x}^{N-1}\log \LR{x}} \, dx =\infty
}
and
\EQQ{
\sup_n \int_{A_{n}} \frac{1}{\LR{x}^{N-1}(\log \LR{x})^2} \, dx \sim C(\al_N) \int_{\R^{N-1}} \frac{1}{\LR{x}^{N-1}(\log \LR{x})^2} \, dx \lec 1.
}
\end{proof}



\begin{thebibliography}{10}
\bibitem{Ar}V. Arnold, {\itshape Geometrical Methods in the Theory of Ordinary Differential Equations},
New York: Springer, 1988
\bibitem{Bn}J. L. Bona and R. Smith, {\itshape The initial-value problem for the Korteweg-de Vries equation}, Philos. Trans. Roy. Soc. London Ser. A {\bfseries 278} (1975), no. 1287, 555--601. 
\bibitem{Bo}J. Bourgain, {\itshape Fourier transform restriction phenomena for certain lattice
  subsets and applications to nonlinear evolution equations. II. The KdV-equation},
  Geom. Funct. Anal. {\bfseries 3} (1993), no. 3, 209--262. 
\bibitem{Bo2}J. Bourgain, {\itshape Periodic Korteweg de Vries equation with measures
   as initial data}, Selecta Math. (N.S.) {\bfseries 3} (1997), no. 2, 115--159.
\bibitem{Chr}M. Christ, J. Colliander and T. Tao, {\itshape Asymptotics, frequency
   modulation, and low regularity ill-posedness for canonical defocusing equations},
   Amer. J. Math. {\bfseries 125} (2003), no. 6, 1235--1293.
\bibitem{Co}J. Colliander, M. Keel, G. Staffilani, H. Takaoka and T. Tao,
  {\itshape Sharp global well-posedness for KdV and modified KdV on ${\bf R}$ and ${\bf T}$},
   J. Amer. Math. Soc. {\bfseries 16} (2003), no. 3, 705--749. 
\bibitem{de2}A. de Bouard, A. Debussche and Y. Tsutsumi, {\itshape White noise driven
   Korteweg-de Vries equation}, J. Funct. Anal. {\bfseries 169} (1999), no. 2, 532--558.
\bibitem{Eg}I. Egorova, {\itshape The Cauchy problem for the KdV equation with almost periodic initial data whose spectrum is nowhere dense}, Adv. Soviet Math., {\bfseries 19} (1994), 181--208.
\bibitem{G}Zihua Guo, {\itshape Global well-posedness of Korteweg-de Vries equation in ${H}^{3/4}({\bf R})$ },
   J. Math. Pures Appl. (9) {\bfseries 91} (2009), no. 6, 583--597.
\bibitem{Giga1}Y. Giga, A. Mahalov and B. Nicolaenko, {\itshape The Cauchy problem for the Navier-Stokes equations with spatially almost periodic initial data},
   Mathematical aspects of nonlinear dispersive equations, vol. 163, 213--222, Princeton Univ. Press, Princeton (2007).
\bibitem{Giga2}Y. Giga, H. Jo, A. Mahalov and T. Yoneda, {\itshape On time analyticity of the Navier-Stokes equations in a rotating frame with spatially almost periodic data},
   Phys. D {\bfseries 237} (2008), no. 10-12, 1422--1428. 
\bibitem{Giga3}Y. Giga, A. Mahalov and T. Yoneda, {\itshape On a bound for amplitudes of Navier-Stokes flow with almost periodic initial data}, J. Math. Fluid Mech. {\bfseries 13} (2011), no. 3, 459--467.
\bibitem{Gi}J. Ginibre, Y. Tsutsumi and G. Velo, {\itshape Existence and uniqueness
   of solutions for the generalized Korteweg de Vries equation}, Math. Z.
   {\bfseries 203} (1990), no. 1, 9--36. 
\bibitem{Gi2}J. Ginibre, Y. Tsutsumi and G. Velo, {\itshape On the Cauchy problem
   for the Zakharov system}, J. Funct. Anal. {\bfseries 151} (1997), no. 2, 384--436.
\bibitem{Kap}T. Kappeler and P. Topalov, {\itshape Global wellposedness of KdV in ${H}^1({\bf T},{\bf R})$},
   Duke Math. J. {\bfseries 135} (2006), no. 2, 327--360.
\bibitem{Ktaka}Takamori Kato, {\itshape Well-posedness for the fifth order KdV equation},
  preprint, arXiv:{\tt 1011.3956 [math.AP]}
\bibitem{Ka}T. Kato, {\itshape The Cauchy problem for the Korteweg-de Vries equation. Nonlinear
  partial differential equations and their applications}, College de France Seminar, Vol. I
  (Paris, 1978/1979), pp. 293--307, Res. Notes in Math.
  {\bfseries 53} Pitman, Boston, Mass.-London, 1981.
\bibitem{Ke}C. E. Kenig, G. Ponce and L. Vega, {\itshape A bilinear estimate with applications
  to the KdV equation}, J. Amer. Math. Soc. {\bfseries 9} (1996), no. 2, 573--603.
\bibitem{Ke1}C. E. Kenig, G. Ponce and L. Vega, {\itshape Global solutions for the KdV equation with unbounded data}, 
  J. Differential Equations {\bfseries 139} (1997), no. 2, 339--364. 
\bibitem{Ke2}C. E. Kenig, G. Ponce and L. Vega, {\itshape On the ill-posedness of some canonical
   dispersive equations}, Duke Math. J. {\bfseries 106} (2001), no. 3, 617--633.
\bibitem{Ki1}N. Kishimoto, {\itshape Well-posedness of the Cauchy problem for the Korteweg-de Vries equation at the critical regularity},
     Differential Integral Equations {\bfseries 22} (2009), no. 5-6, 447--464. 
\bibitem{Ki2}N. Kishimoto, {\itshape Counterexamples to bilinear estimates for the Korteweg-de Vries equation in the Besov-type Bourgain space},
     Funkcial. Ekvac. {\bfseries 53} (2010), no. 1, 133--142.
\bibitem{KT}N. Kishimoto and K. Tsugawa, {\itshape Local well-posedness for quadratic nonlinear Schrodinger equations and the "good'' Boussinesq equation}, Differential Integral Equations {\bfseries 23} (2010), no. 5-6, 463--493.

\bibitem{Mol}L. Molinet, {\itshape Sharp ill-posedness results for the KdV and mKdV equations on the torus},
   preprint, arXiv:{\tt 1105.3601 [math.AP]}
\bibitem{Na}K. Nakanishi, H. Takaoka and Y. Tsutsumi, {\itshape Counterexamples to bilinear
   estimates related with the KdV equation and the nonlinear Schrodinger equation},
   IMS Conference on Differential Equations from Mechanics (Hong Kong, 1999).
    Methods Appl. Anal. {\bfseries 8} (2001), no. 4, 569--578. 
\bibitem{Oh1}Tadahiro Oh, {\itshape Diophantine conditions in global well-posedness for coupled KdV-type systems},
  Electron. J. Differential Equations 2009, No. 52, 48 pp.
\bibitem{Oh2}Tadahiro Oh, {\itshape Diophantine conditions in well-posedness theory of coupled KdV-type systems: local theory}, Int. Math. Res. Not. IMRN 2009, no. 18, 3516--3556. 
\bibitem{Sc}W. M. Schmidt, {\itshape Diophantine approximations and Diophantine equations},
  Lecture Notes in Mathematics, 1467. Springer-Verlag, Berlin, 1991.
\bibitem{Tz}N. Tzvetkov, {\itshape Remark on the local ill-posedness for KdV equation},
  C. R. Acad. Sci. Paris Ser. I Math. {\bfseries 329} (1999), no. 12, 1043--1047.
\bibitem{Yone}T. Yoneda, {\itshape Long-time solvability of the Navier-Stokes equations in a rotating frame with spatially almost periodic large data},
  Arch. Ration. Mech. Anal. {\bfseries 200} (2011), no. 1, 225--237. 
\end{thebibliography}
\end{document}